\newtheorem{defi}{Definition}[section]
\newtheorem{lema}[defi]{Lemma}
\newtheorem{prop}[defi]{Proposition}
\newtheorem{coro}[defi]{Corollary}
\newtheorem{thm}[defi]{Theorem}
\newcommand{\N}{\mathbb N}
\title{The right-generators descendant\\ 
of a numerical semigroup}
\author{Maria Bras-Amorós, Julio Fernández-González}
\date{\today}
\begin{document}
\maketitle
\noindent\let\thefootnote\relax\footnotetext{\hspace{-6.5truemm}\scriptsize 
2010 \emph{Mathematics Subject Classification}. Primary 06F05, 20M14; Secondary 05A99, 68W30.\\
The first author was supported by the Spanish government under grant TIN2016-80250-R and by the Catalan government under grant 2014 SGR 537.\\
The second author is partially supported by the Spanish government under grant MTM2015-66180-R.}

\begin{abstract}
\noindent For a numerical semigroup, we encode the set of primitive elements that are larger than its Frobenius number
and show how to produce in a fast way the corresponding sets for its children in the semigroup tree.
This allows us to present an efficient algorithm for exploring the tree up to a given genus.
The algorithm exploits the second nonzero element of a numerical semigroup and
the particular \emph{pseudo-ordinary} case in which this element is the conductor.
\end{abstract}

\section{\large Introduction}

\noindent Let $\Lambda$ be a \emph{numerical semigroup}, that is, an additive submonoid with finite complement 
in the ordered set $\N_0$ of nonnegative integers.
The \emph{gaps} of $\Lambda$ are the elements in the set $\N_0\setminus\Lambda$,
and so the elements of $\Lambda$ are also called \emph{nongaps}.
The number~$g$ of gaps is the \emph{genus} of $\Lambda$. We assume \,$g\neq 0$,
which amounts to saying that $\Lambda$ is not the trivial semigroup $\N_0$.

\vspace{1truemm}

Let \,$c$\, denote the \emph{conductor} of $\Lambda$, namely the least upper bound in $\Lambda$ of the set of gaps.
Thus, \,$c-1$\, is the largest gap of the semigroup, which is known as its \emph{Frobenius number}.
We refer to the nonzero nongaps that are smaller than the Frobenius number as \emph{left elements}.

\vspace{1truemm}

The lowest nonzero nongap~\,$m$\, is the \emph{multiplicity} of $\Lambda$.
We say that~$\Lambda$~is \emph{ordinary}\, whenever all its gaps are smaller than \,$m$, that is,
whenever \,$c=m$. This is also equivalent to asking the genus and the Frobenius number of~$\Lambda$ to be equal.
In other words, $\Lambda$ is ordinary if and only if it has no left elements. 
Otherwise, the set of left elements of $\Lambda$ is \\[-10pt]
$$\Lambda\cap\left\{m,\, m+1,\, \dots,\, c-2\right\}$$
and, in particular, it always contains the multiplicity $m$.

\vspace{1truemm}

A \emph{primitive element} of $\Lambda$ is a nongap \,$\sigma$\,
such that $\Lambda\!\setminus\!\{\sigma\}$ is still a semigroup,
that is, a nonzero nongap that is not the sum of two nonzero nongaps.
Primi\-tive elements constitute precisely the minimal generating set of the semigroup.
We use the term \emph{right generators} to refer to those primitive elements that
are not left elements. As it comes rightaway from the definitions, $\Lambda$
has at most $m$ right generators, since they must lie in the set \\[-10pt]
$$\left\{c,\, c+1,\, \dots,\, c+m-1\right\}\!.$$
The lowest primitive element of $\Lambda$ is \,$m$. It is a right generator
if and only if $\Lambda$ is ordinary; in this case, $\Lambda$~has exactly $m$ right generators.

\vspace{1truemm}

Any numerical semigroup of genus $g\geq 2$ can be uniquely obtained from
a semigroup of genus~\,$g-1$\, by removing a right generator. This idea gives rise to 
the construction of the semigroup tree, which was first introduced in \cite{oversemigroups}, \cite{fundamentalgaps} 
and also considered in \cite{Br:fibonacci}, \cite{Br:bounds}, \cite{BrBu}.
The nodes of this tree at level $g$ are all numerical semigroups of genus~$g$. The children, if any,
of a given node arise through the removals of each of its right generators.
The first levels are shown in Figure~\ref{tree}, where each semigroup is represented by its set of gaps,
as in \cite{gapsets}.

\vspace{1truemm}

\begin{figure}[H]
\centering 
\resizebox{\textwidth}{!}{
\includegraphics{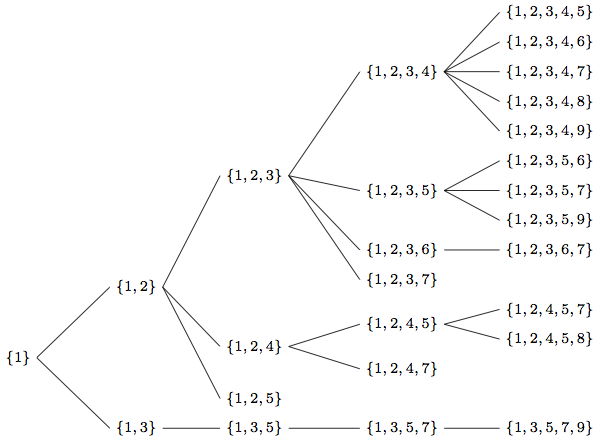}}
\caption{
}\label{tree}
\end{figure}

\vspace{1truemm}

\begin{lema}\label{heritage}
Let \,$\tilde\Lambda$ be the child obtained by removing a right generator \,$\sigma$ from~$\Lambda$. 
Then, the primitive elements of \,$\Lambda$ that become right generators of 
\,$\tilde\Lambda$ are those which are larger than $\sigma$.
\end{lema}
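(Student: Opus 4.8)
The plan is to rest everything on two simple facts: removing $\sigma$ pushes the conductor of the semigroup up to $\sigma+1$, and deleting one primitive element of a numerical semigroup does not disturb the primitivity of the others.

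First I would determine the conductor $\tilde c$ of $\tilde\Lambda$. Since $\sigma$ is a \emph{right} generator of $\Lambda$, it is not a left element, so $\sigma\ge c$, and therefore every gap of $\Lambda$ is at most $c-1<\sigma$. On the other hand, every integer strictly larger than $\sigma$ lies in $\Lambda$ and is distinct from $\sigma$, so it remains in $\tilde\Lambda=\Lambda\setminus\{\sigma\}$. Hence $\sigma$ is the largest gap of $\tilde\Lambda$; equivalently, $\tilde c=\sigma+1$. Next I would record that a primitive element $\rho\ne\sigma$ of $\Lambda$ is still primitive in $\tilde\Lambda$: it remains a nonzero nongap of $\tilde\Lambda$, and any relation $\rho=a+b$ with $a,b$ nonzero nongaps of $\tilde\Lambda\subseteq\Lambda$ would already contradict the primitivity of $\rho$ in $\Lambda$, so $\rho$ is not a sum of two nonzero nongaps of $\tilde\Lambda$.

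Finally I would combine the two. A primitive element $\rho\ne\sigma$ of $\Lambda$ is primitive in $\tilde\Lambda$, and by definition it is a right generator of $\tilde\Lambda$ exactly when it fails to be a left element of $\tilde\Lambda$, that is, when $\rho\ge\tilde c=\sigma+1$, that is, $\rho>\sigma$. Conversely, a primitive element $\rho>\sigma$ of $\Lambda$ automatically satisfies $\rho\ne\sigma$, is therefore primitive in $\tilde\Lambda$ by the previous paragraph, and lies above the Frobenius number $\sigma$ of $\tilde\Lambda$, so it is a right generator of $\tilde\Lambda$. As $\sigma$ itself is no longer an element of $\tilde\Lambda$, this identifies the primitive elements of $\Lambda$ that become right generators of $\tilde\Lambda$ as precisely those larger than $\sigma$. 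The only step requiring real care is the conductor computation, and there the hypothesis that $\sigma$ is a right generator — rather than an arbitrary primitive element — is exactly what is used; the borderline ordinary case $\sigma=m=c$ is covered by the same reasoning without change.
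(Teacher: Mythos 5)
Your argument is correct and follows essentially the same route as the paper's proof: establish that $\tilde\Lambda$ has Frobenius number $\sigma$ (using $\sigma\geq c$) and observe that primitive elements of $\Lambda$ other than $\sigma$ remain primitive in $\tilde\Lambda$. The paper states these two facts in one line and leaves the conclusion implicit, whereas you spell out the details, but the content is identical.
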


\begin{proof}
Since \,$\sigma\geq c$,\, the semigroup $\tilde\Lambda$ has Frobenius number $\sigma$.
Moreover, any primitive element of $\Lambda$ other than $\sigma$ is clearly a primitive element of $\tilde\Lambda$ too.
\end{proof}

\vspace{1truemm}

If follows from Lemma \ref{heritage} that, when successively removing the right gene\-ra\-tors of $\Lambda$ in increasing order, one obtains a numerical semigroup at each step. This leads to the main concept around which this paper is built.

\begin{defi}\label{RGD}{\rm The \emph{right-generators descendant} (RGD) of $\Lambda$ is the 
numerical semigroup \,${\cal D}(\Lambda)$ obtained by removing from $\Lambda$ all its right generators.
}
\end{defi}

\noindent Notice that \,${\cal D}(\Lambda)$\, is just $\Lambda$ precisely when $\Lambda$ 
does not have any right generators. Otherwise, it is a descendant of $\Lambda$ in the semigroup tree at some level. 

\vspace{1truemm}

In the next section we show how to compute efficiently from \,${\cal D}(\Lambda)$\, the RGD of the children
of $\Lambda$. To this end, we restate here a special case of Lemma~2.1 in~\cite{seeds},
where right generators correspond to \emph{order-zero seeds}. The result complements Lemma~\ref{heritage}
by dealing with \,\emph{new right generators}, that is, with the right generators of a child
that are not primitive elements of its parent. The key point goes back to Lemma~3 in~\cite{Br:bounds}.
We provide a short proof whose content is partially needed later on.

\vspace{1truemm}

\begin{lema}\label{new}
Let \,$\tilde\Lambda$ be the child obtained by removing a right generator \,$\sigma$ from~$\Lambda$.
The semigroup $\tilde\Lambda$ is ordinary if and only if \,$\Lambda$ is ordinary and \,$\sigma=m$.
Otherwise, the only possible new right generator of \,$\tilde\Lambda$ could be \,$\sigma+m$.
\end{lema}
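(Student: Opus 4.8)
The plan is to prove the two assertions separately, using repeatedly that a right generator of a numerical semigroup lies in the interval $\{c,c+1,\dots,c+m-1\}$ determined by its conductor $c$ and multiplicity $m$, together with the fact recorded in the proof of Lemma~\ref{heritage} that $\tilde\Lambda$ has Frobenius number $\sigma$, hence conductor $\sigma+1$.

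For the equivalence in the first sentence, the implication from right to left is immediate: if $\Lambda$ is ordinary then $\Lambda=\{0\}\cup\{m,m+1,\dots\}$, and removing $\sigma=m$ produces $\tilde\Lambda=\{0\}\cup\{m+1,m+2,\dots\}$, which is again ordinary. For the reverse implication, suppose $\tilde\Lambda$ is ordinary. If $\Lambda$ were not ordinary we would have $c\geq m+1$, hence $\sigma\geq c\geq m+1>m$; and if $\Lambda$ were ordinary but $\sigma\neq m$ we would have $\sigma\geq m+1>m$ as well. In either case $m$ remains a nongap, so $\tilde\Lambda$ still has multiplicity $m$, while its conductor equals $\sigma+1\geq m+2$; this contradicts ordinarity. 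Hence $\Lambda$ is ordinary and $\sigma=m$.

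Now assume $\tilde\Lambda$ is not ordinary. By what was just proved, $\sigma>m$, so $m$ is still a nongap and $\tilde\Lambda$ has multiplicity $m$; since its conductor is $\sigma+1$, every right generator of $\tilde\Lambda$ lies in $\{\sigma+1,\sigma+2,\dots,\sigma+m\}$. It thus suffices to show that a right generator $x$ of $\tilde\Lambda$ with $\sigma<x<\sigma+m$ is necessarily a primitive element of $\Lambda$, for then the only right generator of $\tilde\Lambda$ that can fail to be one is $x=\sigma+m$. Suppose, for contradiction, that such an $x$ is not primitive in $\Lambda$; then $x=a+b$ for nonzero nongaps $a,b$ of $\Lambda$, whence $a,b\geq m$. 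Since $x$ is primitive in $\tilde\Lambda$, this sum cannot lie entirely in $\tilde\Lambda$, so $a=\sigma$ or $b=\sigma$; but then the remaining summand equals $x-\sigma$, a nonzero nongap of $\Lambda$ that is strictly smaller than $m$ because $x<\sigma+m$ --- which is impossible.

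The only genuinely delicate step is this last decomposition argument, and it is precisely the point where the bound by the multiplicity is used (the ingredient attributed above to Lemma~3 of~\cite{Br:bounds}); everything else is bookkeeping with conductors and multiplicities. I expect that it is this observation --- that a decomposition in $\Lambda$ of an element of $\{\sigma+1,\dots,\sigma+m-1\}$ never involves $\sigma$, so that primitivity there is seen identically in $\Lambda$ and in $\tilde\Lambda$ --- rather than the bare statement of the lemma, that is reused later on.
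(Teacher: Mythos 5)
Your proof is correct and follows essentially the same route as the paper: the conductor of $\tilde\Lambda$ is $\sigma+1$, ordinarity is decided by comparing it with the multiplicity (which changes only when $\sigma=m$), and a new right generator must decompose in $\Lambda$ as $\sigma$ plus a nonzero nongap, hence as $\sigma+\lambda$ with $\lambda\geq m$, while the bound by conductor plus multiplicity forces $\lambda=m$. Your reformulation --- that primitivity of elements strictly between $\sigma$ and $\sigma+m$ is seen identically in $\Lambda$ and $\tilde\Lambda$ --- is just the contrapositive packaging of the paper's argument, and your closing remark correctly identifies the part that is reused later (in the proof of Lemma~\ref{newchain}).
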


\begin{proof}
The conductor of $\tilde\Lambda$ is \,$\sigma+1$.
If \,$\sigma=m$, which can only occur when $\Lambda$ is ordinary, 
then $\tilde\Lambda$ has multiplicity $m+1$ and it is also ordinary.
If \,$\sigma\neq m$, then $\tilde\Lambda$ has still multiplicity $m$ so it cannot be ordinary. 
Moreover, new right generators of $\tilde\Lambda$ must be of the form 
\,$\sigma+\lambda$ for some $\lambda\geq m$,
so the result follows from the fact that any primitive element must be smaller than
the sum of the conductor and the multiplicity.
\end{proof}

\begin{coro} 
The semigroup $\Lambda$ is ordinary if and only if \,${\cal D}(\Lambda)$ is ordinary.
In this case, the conductor of \,${\cal D}(\Lambda)$ is twice the conductor of \,$\Lambda$.
\end{coro}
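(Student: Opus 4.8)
The plan is to prove the corollary by iterating Lemma~\ref{new} along the chain of successive right-generator removals that produces $\mathcal{D}(\Lambda)$ from $\Lambda$, taking care to track the multiplicity and ordinariness at each step.

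\medskip

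First I would dispose of the trivial case: if $\Lambda$ has no right generators, then $\mathcal{D}(\Lambda)=\Lambda$ and there is nothing to prove (note that a semigroup with no right generators cannot be ordinary, since the lowest primitive element $m$ of an ordinary semigroup is a right generator, so this case is vacuous for the equivalence). Assume then that $\Lambda$ has right generators $\sigma_1<\sigma_2<\dots<\sigma_k$, and let $\Lambda=\Lambda_0\supset\Lambda_1\supset\dots\supset\Lambda_k=\mathcal{D}(\Lambda)$ be the chain obtained by removing them in increasing order, so that $\Lambda_i=\Lambda_{i-1}\setminus\{\sigma_i\}$. By Lemma~\ref{heritage}, $\sigma_{i+1},\dots,\sigma_k$ are exactly the right generators of $\Lambda_i$ coming from primitive elements of $\Lambda_{i-1}$, and by Lemma~\ref{new} the only possible \emph{new} right generator created at step $i$ is $\sigma_i+m_{i-1}$, where $m_{i-1}$ is the multiplicity of $\Lambda_{i-1}$; moreover that lemma tells us $\Lambda_i$ is ordinary iff $\Lambda_{i-1}$ is ordinary and $\sigma_i$ is the multiplicity of $\Lambda_{i-1}$.

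\medskip

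For the forward direction, suppose $\Lambda$ is ordinary with conductor $c=m$. Then its right generators are exactly $m,m+1,\dots,2m-1$, so $\sigma_1=m=m_0$; by Lemma~\ref{new}, $\Lambda_1$ is ordinary, with multiplicity $m+1$ and conductor $m+1$. I would then argue inductively that at each step the semigroup $\Lambda_i$ remains ordinary with multiplicity and conductor $m+i$: indeed $\sigma_{i+1}$, being the smallest remaining right generator, equals $m+i$, which is precisely the multiplicity of $\Lambda_i$, so Lemma~\ref{new} applies again and no new right generator appears (the candidate $\sigma_i+m_{i-1}$ lies among the already-listed $\sigma_j$). The process removes $m,m+1,\dots,2m-1$ one by one — exactly $k=m$ steps — ending at the ordinary semigroup $\mathcal{D}(\Lambda)$ of multiplicity $2m$, hence of conductor $2m=2c$.

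\medskip

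For the converse, suppose $\Lambda$ is not ordinary; I must show $\mathcal{D}(\Lambda)$ is not ordinary either. Since $\Lambda$ is not ordinary, $\sigma_1\neq m$ (the multiplicity is a left element, not a right generator). By the ``otherwise'' clause of Lemma~\ref{new}, $\Lambda_1$ is not ordinary and still has multiplicity $m$; inductively, each $\Lambda_i$ keeps multiplicity $m$ and fails to be ordinary, since $\sigma_i\neq m$ forces the non-ordinary alternative. Hence $\mathcal{D}(\Lambda)=\Lambda_k$ is not ordinary, completing the equivalence. The main obstacle — really a bookkeeping point rather than a deep difficulty — is making rigorous the claim that in the ordinary case the list of right generators stays contiguous after each removal, i.e.\ that the new candidate $\sigma_i+m_{i-1}$ never introduces a gap-jump in the sequence $\sigma_1<\dots<\sigma_k$; this follows because at step $i$ with $\Lambda_{i-1}$ ordinary of conductor $m+i-1$, one has $\sigma_i=m+i-1=m_{i-1}$, so the candidate is $2m+2i-2\ge m+i$, already accounted for, and the right generators of $\Lambda_i$ are the consecutive block $\{m+i,\dots,2(m+i-1)\}$.
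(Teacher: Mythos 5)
Your proposal is correct and takes essentially the same route as the paper: build the chain $\Lambda_0\supset\Lambda_1\supset\cdots\supset\Lambda_k={\cal D}(\Lambda)$ by removing the right generators in increasing order, apply Lemma~\ref{new} at each step using that $\sigma_j$ is the lowest right generator of $\Lambda_{j-1}$ (hence equal to its multiplicity exactly in the ordinary case), and read off the conductor $2c$ from the explicit primitive elements of an ordinary semigroup. One side remark in your last paragraph is false, though harmless: in the ordinary case new right generators \emph{do} appear at every step (removing $m$ from the ordinary semigroup of multiplicity $m$ creates the new primitive elements $2m$ and $2m+1$), so the candidate $\sigma_i+m_{i-1}=2(m+i-1)\geq 2m$ is \emph{not} among $\sigma_1,\dots,\sigma_k$, and the right generators of $\Lambda_i$ actually run up to $2(m+i)-1$. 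This does not affect your argument, since ${\cal D}(\Lambda)$ is defined by removing only the right generators of the original $\Lambda$, so the new primitives of the intermediate semigroups are simply irrelevant to the chain.
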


\begin{proof}
If \,$\sigma_1,\dots,\sigma_k$\, are the right generators of \,$\Lambda$\, in increasing order, then
Lem\-ma~\ref{heritage} allows us to construct \,${\cal D}(\Lambda)$ through the chain of semigroups
$$\Lambda_0\,=\,\Lambda \,\supset\, \Lambda_1  \,\supset\, \cdots \cdots \,\supset\, 
\Lambda_k \,=\, {\cal D}(\Lambda),$$
where $\Lambda_j$ is obtained by removing $\sigma_j$ from $\Lambda_{j-1}$ for $j=1,\dots,k$.
By Lemma~\ref{new}, $\Lambda_j$ is ordinary if and only if \,$\Lambda_{j-1}$\, is
ordinary, since \,$\sigma_j$\, is the lowest right generator of $\Lambda_{j-1}$. This
implies the first part of the statement. The second part follows from 
the fact that \,$c,\, c+1,\,\dots\,, 2c-1$\, are the primitive elements of the ordinary semigroup with conductor $c$.
\end{proof}

\vspace{3truemm}

The main goal of this article is an algorithm to explore the semigroup tree which is based
on a binary encoding of the RGD. A preliminary version is given in Section 2, and then
the algorithm is presented in full form in Section 4. 
An important role is played by what we call the \emph{jump} of a numerical semigroup,
which is the difference between its first two nonzero elements.
This parameter is exploited in Section 3 for our purposes, leading to 
a structure of the algorithm for which the subtrees of semigroups
with the same multiplicity and jump can be explored in parallel.
In Section 5, we test the efficiency of the algorithm by applying it 
to the computational problem of counting numerical semigroups by their genus.
Our running times are shorter than those required by the best known algorithms.

\vspace{1truemm}

\section{\large Binary encoding of the RGD}

Let us fix a numerical semigroup $\Lambda$ with multiplicity $m$ and conductor $c$.
We associate with $\Lambda$ the binary string
$$D(\Lambda)\,=\,D_0\,D_1\,\cdots\,D_j\,\cdots $$
encoding its RGD as follows: for $j\geq 0$,
$$D_j:=\left\{\begin{array}{l}
{\bf 0} \quad \mathrm{if} \ \ m+j\,\in\,{\cal D}(\Lambda),\\[5pt]
{\bf 1} \quad \mathrm{otherwise}.
\end{array}
\right. 
$$

\vspace{1truemm}

The left elements of $\Lambda$ can be read from the first \,$c-m$\, bits of $D(\Lambda)$. 
Indeed, for \,$0\leq j<c-m$, one has \,$D_j={\bf 0}$\, if and only if \,$m+j$\, is not a gap of $\Lambda$.
In particular, this piece of the string $D(\Lambda)$, together with the multiplicity $m$, determines uniquely
the semigroup $\Lambda$.

\vspace{1truemm}

As for the rest of the string, only the first \,$m$\, bits may take the value ${\bf 1}$, 
depending on whether they correspond to right generators of $\Lambda$ or not:
for \mbox{\,$j\geq c-m$,} one has \,$D_j={\bf 1}$\, if and only if \,$m+j$\, is a primitive element of $\Lambda$.
In particular, $\Lambda$~is ordinary if and only if \,$D_0={\bf 1}${. In this case,} \,$D_j={\bf 1}$\, for \,$j<m$.

\vspace{1truemm}

Since \,$D_j={\bf 0}$\, for \,$j\geq c$, we can identify $D(\Lambda)$ 
with the chain consisting of its first~\,$c$\, bits:
$$D(\Lambda)\,=\,D_0\,D_1\,\cdots\,D_{c-1}.$$
The binary chains \,$D(\Lambda)$,\, for $\Lambda$ running over the first levels
of the semigroup tree, are displayed in Figure \ref{treeDL}, where the bits encoding right 
generators are highlighted in grey. The vertical dashed line at each chain lies just before the entry
corresponding to the conductor of the semigroup. The number of bits after that line is equal to the multiplicity.

\vspace{-1truemm}

\begin{figure}[H]
\centering
\resizebox{\textwidth}{!}{
\includegraphics{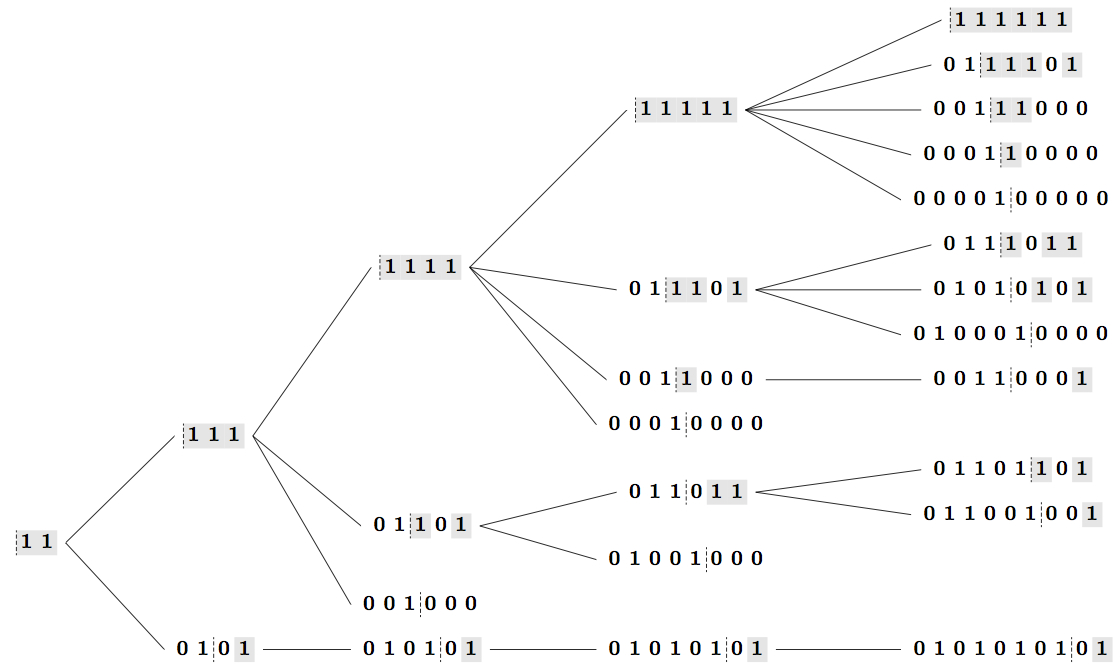}}
\caption{
}\label{treeDL}
\end{figure}

\vspace{1truemm}

\begin{lema}\label{ordinary}
If \,$\Lambda$ is ordinary, then it has exactly \,$m$ children, namely
$$\tilde\Lambda_s=\Lambda\setminus\{m+s\} \qquad \mathit{for} \qquad 0\leq s<m.$$
The child \,$\tilde\Lambda_0$ is the ordinary semigroup of multiplicity \,$m+1$, which means that
the bit chain associated with this child is
$$D(\tilde\Lambda_0) \,=\, \dashrule[-0.6ex]{0.4}{3 1.5 3 1.5 3}\, 
\underbrace{{\bf 1} \ {\bf 1} \,\cdots\cdots\, {\bf 1}}_{m+1}.$$
The other children have multiplicity \,$m$, and only $\tilde\Lambda_1$ has a new right generator,
namely \,$2m+1$. More precisely, the bit chains associated with them are as follows:
$$D(\tilde\Lambda_1) \,=\, {\bf 0} \ {\bf 1} \,\,\dashrule[-0.6ex]{0.4}{3 1.5 3 1.5 3}\, 
\underbrace{{\bf 1} \,\cdots\, {\bf 1}}_{m-2} \ {\bf 0} \ {\bf 1}\,,$$
whereas, assuming \,$m>2$,
$$D(\tilde\Lambda_s) \,=\, \underbrace{{\bf 0} \,\cdots\, {\bf 0}}_s \ {\bf 1}
\,\,\dashrule[-0.6ex]{0.4}{3 1.5 3 1.5 3}\,
\underbrace{{\bf 1} \,\cdots\, {\bf 1}}_{m-s-1} \, \underbrace{{\bf 0} \, \cdots \, {\bf 0}}_{s+1}
\qquad \mathit{for} \qquad 2\leq s\leq m-1.$$
\end{lema}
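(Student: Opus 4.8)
\section*{Proof proposal}

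The plan is to exploit the completely explicit form of an ordinary semigroup. Since $\Lambda$ ordinary means $c=m$, we have $\Lambda=\{0\}\cup\{n\in\N_0:n\ge m\}$, with minimal generating set $\{m,m+1,\dots,2m-1\}$. As the children of a node are obtained by removing each of its right generators, $\Lambda$ has exactly the $m$ children $\tilde\Lambda_s=\Lambda\setminus\{m+s\}$ with $0\le s<m$, which is the first assertion. For $s=0$, removing the multiplicity produces the ordinary semigroup of multiplicity $m+1$; by the corollary above its RGD is ordinary with conductor $2(m+1)$, so the string $D(\tilde\Lambda_0)$ consists of $m+1$ bits all equal to ${\bf 1}$, with the dashed line at the front because this child is again ordinary. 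That disposes of the first displayed chain.

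For $1\le s\le m-1$ I would first pin down the invariants of $\tilde\Lambda_s$: removing $m+s>m$ leaves the multiplicity equal to $m$, makes $m+s$ the Frobenius number and $m+s+1$ the conductor, and keeps as left elements exactly $m,m+1,\dots,m+s-1$. By construction of $D(\cdot)$, the $s+1$ bits preceding the conductor --- where the dashed line goes --- record these $s$ left elements followed by the gap $m+s$, so they read $\underbrace{{\bf 0}\cdots{\bf 0}}_{s}{\bf 1}$.

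The remaining bits are determined by the right generators of $\tilde\Lambda_s$, and here is where the only real work lies. By Lemma~\ref{heritage}, the primitive elements of $\Lambda$ surviving as right generators of $\tilde\Lambda_s$ are those exceeding $\sigma=m+s$, namely $m+s+1,\dots,2m-1$; these sit in bit positions $s+1,\dots,m-1$, contributing $m-s-1$ ones. By Lemma~\ref{new}, the only candidate for a new right generator is $\sigma+m=2m+s$, so everything comes down to deciding whether $2m+s$ is primitive in $\tilde\Lambda_s$. For $s=1$ it is: writing $2m+1=m+\lambda$ would force $\lambda=m+1$, which is now a gap, and any other decomposition uses two nonzero nongaps each at least $m+2$, hence a sum at least $2m+4$. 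For $2\le s\le m-1$ it is not, since $2m+s=(m+1)+(m+s-1)$ with both summands among the left elements $m,\dots,m+s-1$; thus $\tilde\Lambda_s$ has no new right generator and bit positions $m,\dots,m+s$ are all ${\bf 0}$. I expect this primitivity dichotomy to be the main obstacle.

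It remains to assemble the strings and check their lengths. For $s=1$ this gives the prefix ${\bf 0}\,{\bf 1}$, then ${\bf 1}$ in positions $2,\dots,m-1$, then ${\bf 0}$ in position $m$, then ${\bf 1}$ in position $m+1$, which is exactly the stated $D(\tilde\Lambda_1)$. For $2\le s\le m-1$ it gives $\underbrace{{\bf 0}\cdots{\bf 0}}_{s}{\bf 1}$, then $m-s-1$ ones, then $s+1$ zeros, of total length $s+1+(m-s-1)+(s+1)=m+s+1$, the conductor of $\tilde\Lambda_s$, as it should be. Apart from the primitivity point, the only care needed is with the degenerate small cases --- $m=2$, where only $\tilde\Lambda_0$ and $\tilde\Lambda_1$ occur, and the boundary values $s=1$ and $s=m-1$, where some of the underbraces are empty.
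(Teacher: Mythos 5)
Your proposal is correct and follows essentially the same route as the paper's proof: identify the $m$ right generators $m,\dots,2m-1$ of the ordinary semigroup, use Lemma~\ref{heritage} for the inherited generators and Lemma~\ref{new} to reduce the question to whether $2m+s$ is primitive, settle $s=1$ by the minimality of nongaps above the conductor, and kill $s\ge 2$ via the same decomposition $2m+s=(m+1)+(m+s-1)$. The extra bookkeeping you do on bit positions and string lengths is sound but not needed beyond what the paper leaves implicit.
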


\vspace{1truemm}

\begin{proof}
The right generators of $\Lambda$ are \,$m,\, m+1,\, \dots,\, 2m-1$.
By Lemma~\ref{new}, $\tilde\Lambda_s$ is not ordinary if and only if $s\neq 0$, and then
the only possible new right generator of \,$\tilde\Lambda$ could be \,$2m+s$.
Since $\tilde\Lambda_1$ has multiplicity \,$m$\, and conductor \,$m+2$, the integer \,$2m+1$\, is
a primitive element of this child. By contrast, $2m+s$\, is not a primitive element of $\tilde\Lambda_s$
if $s>1$, because \,$m+1$\, and \,$m+s-1$\, are both nongaps of this child.
The result follows then from  Lemma~\ref{heritage} and the definition of the bit chain encoding the RGD
of a numerical semigroup.
\end{proof}

\vspace{1truemm}

The children $\tilde\Lambda_1,\dots,\tilde\Lambda_{m-1}$ in Lemma \ref{ordinary} are the  
numerical semigroups of multiplicity $m$ having genus $m$. We refer to them as \emph{quasi-ordinary} semigroups.

\vspace{1truemm}

Let us go back to the general case and consider a child $\tilde\Lambda$, 
which is obtained by removing a right generator~$\sigma$ from~$\Lambda$.
The index \,$s$\, corresponding to \,$\sigma$\, in the bit chain $D(\Lambda)$, namely \,$s=\sigma-m$,\,
satisfies \,$c-m\leq s<c$.
The condition \,$D_s={\bf 1}$\, is equivalent to~\,$\sigma$\, being a right generator of $\Lambda$.
Notice that the Frobenius number of $\tilde\Lambda$ is \,$m+s$.

\vspace{1truemm}

\begin{lema}\label{newchain}
Let \,$D(\tilde\Lambda)=\tilde D_0\,\tilde D_1\,\cdots\,\tilde D_{m+s}$\, 
be the bit chain associated with $\tilde\Lambda$. Then,
$$\widetilde D_j \,=\, \left\{\begin{array}{ccl}
D_j & \quad \mathrm{for} \quad\quad  & 0\,\leq\, j\,<\,c\,-\,m,\\[10pt]
\bf 0 & \quad \mathrm{for} \quad\quad  & c\,-\,m\,\leq\, j\,<\,s,\\[10pt]
D_j & \quad \mathrm{for} \quad\quad  & s\,\leq\, j\,<\,c,\\[10pt]
{\bf 0} & \quad \mathrm{for} \quad\quad  & c\,\leq\, j\,<\,m+s.
\end{array}\right.$$
Furthermore, $\tilde D_{m+s}={\bf 0}$ if and only if there is a positive integer \,$\ell\leq\lfloor s/2\rfloor$\, such that \,$\tilde D_{\ell}=\tilde D_{s-\ell}={\bf 0}$. This amounts to saying that
$\tilde\Lambda$ does not have a new right generator.
\end{lema}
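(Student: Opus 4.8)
The plan is to prove the bit-chain formula in two parts, as the statement itself suggests: first determine the bits $\tilde D_j$ for $0 \le j < m+s$ (the first four cases of the displayed formula), and then analyze separately the single bit $\tilde D_{m+s}$ corresponding to the Frobenius number plus multiplicity of $\tilde\Lambda$, i.e.\ the only candidate for a new right generator by Lemma~\ref{new}.

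For the first part I would recall that $\tilde\Lambda$ has the same multiplicity $m$ as $\Lambda$ (since $\sigma \ge c > m$, so we are not in the ordinary case of Lemma~\ref{new}) and Frobenius number $m+s = \sigma$, hence conductor $\sigma+1 = m+s+1$. For $0 \le j < c-m$, the bit $\tilde D_j$ encodes whether $m+j$ is a left element of $\tilde\Lambda$; since $m+j < c \le \sigma$, membership in $\tilde\Lambda$ is the same as membership in $\Lambda$, so $\tilde D_j = D_j$. For $c-m \le j < s$, the integer $m+j$ lies strictly between $c-1$ and $\sigma$, so it is a nongap of $\tilde\Lambda$ but, lying below the conductor $m+s+1$ and not being a minimal generator in the relevant range (it is a left element of $\tilde\Lambda$ since $m+j < \sigma$), the encoding rule gives $\tilde D_j = \mathbf 0$; more precisely, by Lemma~\ref{heritage} the right generators of $\tilde\Lambda$ inherited from $\Lambda$ are exactly those primitive elements of $\Lambda$ larger than $\sigma$, so none of them sit below $\sigma$, which forces $\tilde D_j=\mathbf 0$ in this range. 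For $s \le j < c$, the bit $\tilde D_j$ (with $j > s$) records whether $m+j$ is a right generator of $\tilde\Lambda$; by Lemma~\ref{heritage} this happens iff $m+j$ is a primitive element of $\Lambda$ larger than $\sigma$, i.e.\ iff $D_j = \mathbf 1$, and the case $j=s$ gives $\tilde D_s = \mathbf 0$ because $\sigma$ was removed, matching $D_s = \mathbf 1$ only up to this single flip — here I would need to be slightly careful and state that the formula means $\tilde D_j = D_j$ for $s < j < c$ together with $\tilde D_s = \mathbf 0$, or reconcile this with the wording $D_j$ by noting $D_s=\mathbf 1$ has already been ``used up''. Finally, for $c \le j < m+s$ we have $m+j \ge c+m$, which exceeds the conductor-plus-multiplicity bound $m+s+1 = c_{\tilde\Lambda}+m$ only when $j \ge s+1$; these integers are too large to be primitive in $\tilde\Lambda$, so $\tilde D_j = \mathbf 0$.

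For the second part — the value of $\tilde D_{m+s}$ — I would invoke Lemma~\ref{new}: the only possible new right generator of $\tilde\Lambda$ is $\sigma + m = 2m+s$, which corresponds to index $j = m+s$ in $D(\tilde\Lambda)$. The integer $2m+s = \sigma+m$ fails to be a primitive element of $\tilde\Lambda$ exactly when it can be written as a sum $\lambda_1 + \lambda_2$ of two nonzero nongaps of $\tilde\Lambda$. Writing $\lambda_i = m + a_i$ with $a_i \ge 0$, the condition $\lambda_1 + \lambda_2 = 2m+s$ becomes $a_1 + a_2 = s$, and $\tilde D_{a_i} = \mathbf 0$ says exactly that $m+a_i \in \tilde\Lambda$. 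Thus $2m+s$ is a sum of two nongaps of $\tilde\Lambda$ iff there exist $a_1, a_2 \ge 0$ with $a_1 + a_2 = s$ and $\tilde D_{a_1} = \tilde D_{a_2} = \mathbf 0$; excluding the endpoint contribution (we need genuine nonzero nongaps, so $a_i \ge 0$ is fine since $m+a_i \ge m > 0$), and using the symmetry $a_1 \leftrightarrow a_2 = s - a_1$, we may assume $1 \le a_1 = \ell \le \lfloor s/2 \rfloor$, noting that the case $\ell = 0$ forces $a_2 = s$ and would require $\tilde D_0 = \mathbf 0$ and $\tilde D_s = \mathbf 0$, but $\tilde D_0 = \mathbf 0$ means $\tilde\Lambda$ is not ordinary (true here) while having $m \in \tilde\Lambda$ is automatic and contributes nothing — actually the decomposition $0 + s$ corresponds to $m + (m+s) = 2m+s$, and $m+s = \sigma$ is a \emph{gap} of $\tilde\Lambda$, so $\tilde D_s = \mathbf 1$ always and the $\ell=0$ term never contributes, which is precisely why the range is $1 \le \ell \le \lfloor s/2\rfloor$. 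Hence $\tilde D_{m+s} = \mathbf 0$ iff such an $\ell$ exists, which is the claimed equivalence, and by Lemma~\ref{new} this is equivalent to $\tilde\Lambda$ having no new right generator.

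The main obstacle I anticipate is bookkeeping the boundary indices precisely — in particular reconciling the third case $\tilde D_j = D_j$ for $s \le j < c$ with the fact that $\tilde D_s = \mathbf 0$ while $D_s = \mathbf 1$ (this needs a careful reading of what the formula asserts, or an explicit remark that the overlap index $j=s$ is handled by the removal of $\sigma$), and correctly justifying that the $\ell = 0$ decomposition is excluded because $\sigma$ itself is a gap of the child. Everything else is a direct translation between the combinatorics of sums of nongaps and the bit-string encoding, using only Lemmas~\ref{heritage} and~\ref{new} together with the observation that the conductor of $\tilde\Lambda$ is $\sigma+1$.
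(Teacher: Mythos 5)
Your overall structure matches the paper's proof, and the second half of your argument (analysing $\tilde D_{m+s}$ via sums of two nongaps, excluding $\ell=0$ because $\sigma$ is a gap of $\tilde\Lambda$, and using symmetry to reduce to $\ell\leq\lfloor s/2\rfloor$) is correct and essentially what the paper does. But there is a genuine error at the index $j=s$. You claim $\tilde D_s={\bf 0}$ ``because $\sigma$ was removed'' and propose to repair the statement by special-casing this index. In fact $\tilde D_s={\bf 1}=D_s$ and the formula is correct exactly as written: the bit $\tilde D_j$ equals ${\bf 1}$ whenever $m+j$ is \emph{not} in $\mathcal{D}(\tilde\Lambda)$, which covers gaps as well as right generators, and $m+s=\sigma$ is precisely the Frobenius number of $\tilde\Lambda$, hence a gap. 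You implicitly get this right later, when you observe that the decomposition $\ell=0$ fails because ``$\tilde D_s={\bf 1}$ always'' --- which contradicts your earlier claim. The misreading is of the encoding itself: ${\bf 1}$ does not mean ``right generator''; it means ``not in the RGD''. The paper's proof makes this point explicitly ($\tilde D_s={\bf 1}=D_s$ because $m+s$ is a gap of $\tilde\Lambda$), so no special-casing or reinterpretation of the displayed formula is needed.

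A second, smaller gap: you justify that $\tilde\Lambda$ keeps multiplicity $m$ by the inequality $\sigma\geq c>m$, but $c>m$ fails exactly when $\Lambda$ is ordinary, and the lemma is stated for general $\Lambda$. If $\Lambda$ is ordinary and $\sigma=m$, then $\tilde\Lambda$ has multiplicity $m+1$ and the bit chain is re-indexed, so your range-by-range argument does not apply verbatim. The paper disposes of the ordinary case first by checking the formula against Lemma~\ref{ordinary} and only then assumes $c\neq m$; your proof needs the same reduction (with a correct justification) before the main computation. Also, your treatment of the range $c\leq j<m+s$ is garbled: the right reason these bits vanish is that inherited primitive elements of $\Lambda$ are below $c+m$ while the only possible new right generator is $2m+s$ (Lemma~\ref{new}), not that these integers ``are too large to be primitive in $\tilde\Lambda$'' --- indeed $2m+s$ itself may well be primitive.
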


\begin{proof} 
The statements hold if $\Lambda$ is ordinary, as can be checked from Lemma~\ref{ordinary}.
So we can assume \,$c\neq m$, hence \,$s\neq 0$\, and $\tilde\Lambda$ has multiplicity \,$m$.
The result comes from the definitions of $D(\Lambda)$ and $D(\tilde\Lambda)$ along with the
following remarks. The left elements of~$\tilde\Lambda$ are those of $\Lambda$
together with the integers $\lambda$ such that \,$c\leq\lambda<m+s$, which
yields the values of $\widetilde D_j$ for $j<s$, whereas \,$\tilde D_{s}={\bf 1}=D_{s}$\,
because $m+s$ is a gap of $\tilde\Lambda$.
As for the right generators of $\tilde\Lambda$, according to Lemma~\ref{heritage} and Lemma~\ref{new},
they are those which are inherited from $\Lambda$, hence above \,$m+s$\, and below \,$c+m$, and maybe also \,$2m+s$.
This candidate to new right generator is not a primitive element of $\tilde\Lambda$
if and only if there is a positive integer~\,$\ell$\, such that \,$m+\ell$\, and \,$m+s-\ell$\, are
nongaps of~$\tilde\Lambda$. Since the midpoint of these two values is \,$m+s/2$, such an integer
can be assumed to satisfy \,$\ell\leq\lfloor s/2\rfloor$. Notice that \,$m+\ell$\, and \,$m+s-\ell$\,
are below the Frobenius number \,$m+s$, so they lie in $\tilde\Lambda$ if and only if 
\,$\tilde D_{\ell}=\tilde D_{s-\ell}={\bf 0}$.
\end{proof}

\vspace{1truemm}

Lemma~\ref{newchain} leads to a procedure to explore the semigroup tree using the RGD binary encoding. 
More precisely, the following recursive function visits the 
descendants of the semigroup $\Lambda$ up to a fixed genus $\gamma$. 
The first input item stands for the binary string associated with $\Lambda$, and $g$ stands for the genus. 
If $\Lambda$ is not ordinary, line 7 can be deleted, 
and then \,$\tilde m$\, should be replaced with \,$m$\, at line~12.
An implementation of this procedure runs on the website \cite{combos}.

\vspace{5truemm}

{\footnotesize
\hrule
\hrule
\vspace{2truemm}
\noindent{\quad \  $\mathbf{RGD-offspring}_\gamma(D,\,m,\,c,\,g)$}\\[-5pt]
\hrule
\begin{enumerate}\itemsep0.6truemm
\item \ {\bf if} \ \,$g<\gamma$\, \ {\bf then}
\item \qquad $\tilde D:=D$
\item \qquad $\tilde g:=g+1$
\item \qquad {\bf for} \ \,$s$\, \ {\bf from} \ \,$c-m$\, \ {\bf to} \ \,$c-1$\, \ {\bf do}
\item \qquad\qquad{\bf if} \ \,$\tilde D_s \,\neq\, {\bf 0}$\, \ {\bf then}
\item \qquad\qquad\qquad $\mathrm{\texttt Frob}:=m+s$
\item \qquad\qquad\qquad {\bf if} \ \,$s=0$\, \ {\bf then} \ \,$\tilde m:=m+1$\, \ {\bf else} \ \,$\tilde m:=m$
\item \qquad\qquad\qquad $\ell:=1$
\item \qquad\qquad\qquad $\mathrm{\texttt mid}:=\lfloor s/2\rfloor$
\item \qquad\qquad\qquad{\bf while} \ \,$\ell\leq\mathrm{\texttt mid}$\, 
\ {\bf and} \ $\big(\tilde D_{\ell}\neq{\bf 0}\,$ \ \,{\bf or}\, \ $\tilde D_{s-\ell}\neq{\bf 0}\big)$
\ {\bf do} \ \,$\ell:=\ell + 1$
\item \qquad\qquad\qquad {\bf if} \ \,$\ell>\mathrm{\texttt mid}$\, \ {\bf then} 
\ \,$\tilde D_{\mathrm{\texttt Frob}}:={\bf 1}$\,
\item \qquad\qquad\qquad $\mathbf{RGD-offspring}_\gamma\big(\tilde D, \ \tilde m, \ \mathrm{\texttt Frob}+1, \ \tilde g\big)$
\item \qquad\qquad\qquad $\tilde D_{\mathrm{\texttt Frob}} := {\bf 0}$
\item \qquad\qquad\qquad $\tilde D_s := {\bf 0}$
\end{enumerate}
\hrule
\hrule
}

\vspace{5truemm}

When running this function, the most expensive computation for each descendant is at worst 
the checking loop at line 10. The number of steps in the loop is bounded by half the conductor of the node, 
hence by the genus.
And each step carries out at most five basic operations, which are constant or logarithmic on the genus. 
This gives the following estimate for the time complexity.

\begin{prop}
The number of bit operations to explore the semigroup tree up to genus $\gamma$
using the \,$\mathbf{RGD-offspring}_\gamma$\, function is
$$O\bigg(\gamma\,\log\gamma\,\sum_{g\,\leq\,\gamma}\!n_g\bigg),$$\\[-8pt]
where \,$n_g$ stands for the number of numerical semigroups of genus \,$g$.
\end{prop}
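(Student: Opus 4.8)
The plan is to bound the total work done by the recursive function by summing, over all visited nodes, the cost incurred at that node, and then to argue that each visited node is charged a cost of at most $O(\gamma\log\gamma)$. First I would observe that the recursion tree of $\mathbf{RGD-offspring}_\gamma$ is precisely the semigroup tree truncated at level $\gamma$: the top-level call corresponds to the root, and each iteration of the \textbf{for} loop at line~4 for which the test at line~5 succeeds — i.e.\ each right generator $m+s$ of the current node — triggers exactly one recursive call, which by Lemma~\ref{heritage} and Lemma~\ref{newchain} correctly builds the bit chain of the corresponding child. Hence the number of recursive invocations equals $\sum_{g\le\gamma} n_g$, and it suffices to show that the work performed \emph{within} a single invocation (excluding the time spent inside the recursive sub-calls it makes) is $O(\gamma\log\gamma)$.

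Next I would tally the cost of one invocation. The \textbf{for} loop at line~4 runs over $s$ from $c-m$ to $c-1$, that is, $m$ iterations; since $m\le c\le 2g\le 2\gamma$ (the conductor is at most twice the genus, and certainly $m\le c$), this is $O(\gamma)$ iterations. For each iteration, lines~5--9 and 11--14 are a constant number of array accesses and arithmetic operations on integers bounded by $m+s<c+m\le 4\gamma$, hence $O(\log\gamma)$ bit operations each. The only potentially expensive part is the \textbf{while} loop at line~10, which runs at most $\mathrm{\texttt{mid}}=\lfloor s/2\rfloor\le c/2\le\gamma$ times, each step performing at most five basic operations on integers of size $O(\gamma)$, i.e.\ $O(\log\gamma)$ bit operations per step. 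Multiplying, a single iteration of the \textbf{for} loop costs $O(\gamma\log\gamma)$ bit operations in the worst case, and summing over the $O(\gamma)$ iterations would naively give $O(\gamma^2\log\gamma)$ per node. The refinement is that the \textbf{while} loop is only entered when line~5 succeeds, i.e.\ only once per \emph{child} of the current node, and the paper's intended accounting charges the cost of that loop to the child rather than to the parent: each child is charged one execution of the checking loop, costing $O(\gamma\log\gamma)$.

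With this reassignment of costs, every node of the truncated tree is charged at most once for its own line-4 loop bookkeeping ($O(\gamma\log\gamma)$, dominated by the $O(\gamma)$ cheap iterations) and at most once for the checking loop run by its parent when it was created ($O(\gamma\log\gamma)$). Summing over all $\sum_{g\le\gamma} n_g$ nodes yields the claimed bound
$$O\!\left(\gamma\,\log\gamma\,\sum_{g\le\gamma} n_g\right).$$
I expect the main obstacle to be precisely this amortization step: one must be careful that the $O(\gamma)$ cheap iterations of the \textbf{for} loop at a node with $k$ children still only cost $O(\gamma\log\gamma)$ total (they do, since there are $O(\gamma)$ of them regardless of $k$), and that the $k$ expensive \textbf{while}-loop executions are paid for by the $k$ children, each absorbing one such execution of cost $O(\gamma\log\gamma)$. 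The paragraph preceding the proposition already signals this: "the most expensive computation for each descendant is at worst the checking loop at line~10", so the write-up should make explicit that "for each descendant" means the loop is attributed to the descendant it produces, and then the sum over descendants up to genus $\gamma$ is exactly $\sum_{g\le\gamma} n_g$ terms of size $O(\gamma\log\gamma)$.
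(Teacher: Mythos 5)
Your proposal is correct and follows essentially the same accounting as the paper, whose entire justification is the paragraph preceding the proposition: charge each descendant with one execution of the checking loop at line~10, bounded by half the conductor (hence by the genus~$\gamma$) steps of at most five basic operations, each costing $O(\log\gamma)$ bit operations. Your additional care in verifying that the $O(\gamma)$ cheap iterations of the \textbf{for} loop at each node are also dominated by $O(\gamma\log\gamma)$ is a welcome explicit check of a point the paper leaves implicit.
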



\section{\large Pseudo-ordinary semigroups}

As in the previous section, let us fix an integer $m\geq 2$, and 
let $\Lambda$ stand for a numerical semigroup with multiplicity $m$. Let then
\,$m+u$\, be the second nonzero element of $\Lambda$, that is, its first nongap above $m$. 
We refer to the positive integer~\,$u$\, as the \emph{jump} of $\Lambda$.
Obviously, it cannot be larger than~\,$m$, and the conductor \,$c$\, of $\Lambda$
is at least \,$m+u$\, unless $\Lambda$ is ordinary, in which case \,$u=1$.

\vspace{1truemm}

Let us first exploit the jump to improve Lemma~\ref{new}. 
According to the nomenclature introduced in~\cite{BrBu}
for the nonordinary case, we say that a right generator~\,$\sigma$ of~$\Lambda$ 
is a \emph{strong generator} if \,$\sigma+m$\, is a primitive element of 
the child~{$\Lambda\setminus\{\sigma\}$}. 
The next result gives a threshold for strong generators.
The ordinary case was already treated separately in Lemma~\ref{ordinary}.

\vspace{1truemm}

\begin{lema}\label{threshold}
Let \,$\tilde\Lambda$ be the child obtained by removing a right generator \,$\sigma$ from~$\Lambda$.
If \,$\Lambda$ is not ordinary and \,$\sigma\geq c+u$, then \,$\sigma$ satisfies the following:
\begin{itemize}
\item[$\cdot$] It is not a strong generator of \,$\Lambda$.
\item[$\cdot$] The only strong generator of \,$\tilde\Lambda$ could be~\,$\sigma + u$, in which case \,$2u<m$.
\end{itemize}
\end{lema}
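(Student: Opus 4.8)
The plan is to work entirely with the bit‑chain picture and the explicit description of which integers become primitive elements in a child, as laid out in Lemma~\ref{new} and its proof. Write the child as $\tilde\Lambda=\Lambda\setminus\{\sigma\}$, so its conductor is $\sigma+1$ and its Frobenius number is $\sigma$. Recall from the proof of Lemma~\ref{new} that a candidate new right generator of a child is always of the form (old Frobenius number)$\,+\,$(multiplicity), and that such a candidate fails to be primitive exactly when it can be written as a sum of two nongaps of the child lying strictly below its Frobenius number. I will apply this twice: once to $\Lambda$ itself (to decide whether $\sigma$ is a strong generator of its parent, equivalently whether $\sigma+m$ is a new right generator of $\tilde\Lambda$), and once to $\tilde\Lambda$ (to locate the strong generator of $\tilde\Lambda$, i.e.\ a new right generator $2m+s$ of a grandchild).

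First I would prove the first bullet. Suppose $\sigma\ge c+u$. The candidate new right generator of $\tilde\Lambda$ is $\sigma+m$. I want to exhibit a decomposition $\sigma+m=a+b$ with $a,b$ nongaps of $\tilde\Lambda$ and $m\le a\le b<\sigma$. Take $a=m+u$ (the second nonzero element, a genuine nongap of $\Lambda$ hence of $\tilde\Lambda$ since $m+u\le c<\sigma$) and $b=\sigma+m-(m+u)=\sigma-u$. Because $\sigma\ge c+u$ we get $b=\sigma-u\ge c>c-1$, so $b$ is $\ge c$ and $b<\sigma$, hence $b$ is a nongap of $\Lambda$, and $b\ne\sigma$, so $b$ is a nongap of $\tilde\Lambda$. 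Also $b=\sigma-u\ge m+u-u=m$ when $\sigma\ge c+u\ge m+2u\ge m$ — more simply, $b\ge c\ge m$. Thus $\sigma+m$ is a sum of two nonzero nongaps of $\tilde\Lambda$ both below its Frobenius number, so it is not primitive; that is, $\sigma$ is not a strong generator of $\Lambda$.

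Next, the second bullet. For $\tilde\Lambda$ (which is not ordinary, since $\Lambda$ is not ordinary and $\sigma\ge c+u>m$, so $\tilde\Lambda$ still has multiplicity $m$), the only candidate new right generator of a grandchild, obtained by removing the lowest right generator of $\tilde\Lambda$, is (Frobenius of $\tilde\Lambda$)$\,+\,m=\sigma+m$; but we just saw $\sigma+m\notin\tilde\Lambda$ is false — rather we need the description of which right generator of $\tilde\Lambda$ can be strong. A right generator $\tau$ of $\tilde\Lambda$ is strong iff $\tau+m$ is not a sum of two nongaps of $\tilde\Lambda\setminus\{\tau\}$ below $\tau$. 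I would argue that $\tau$ must be $\sigma+u$: for any right generator $\tau$ of $\tilde\Lambda$ with $\tau>\sigma+1$, the value $\tau+m$ decomposes as $(m+u)+(\tau-u)$, and $\tau-u$ is a nongap of $\tilde\Lambda$ below $\tau$ provided $\tau-u\ge\sigma+1$, i.e.\ $\tau\ge\sigma+u+1$ — using here that the integers from $\sigma+1=c(\tilde\Lambda)$ up are all nongaps of $\tilde\Lambda$. The remaining right generator not excluded is $\tau=\sigma+u$ (note $\sigma+1\le\sigma+u$ since $u\ge1$, and $\tau=\sigma+1$ is covered because then $\tau-u=\sigma+1-u$; if $u=1$ this equals $\sigma$ which is removed, so I must treat $u=1$ by hand, but then $\sigma\ge c+1$ and a symmetric split works). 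Finally, if $\sigma+u$ is a strong generator of $\tilde\Lambda$, then $\sigma+u+m$ must fail to split as a sum of two nongaps of $\tilde\Lambda\setminus\{\sigma+u\}$ below $\sigma+u$; the split $(m+u)+(\sigma+m-u)$ has second summand $\sigma+m-u$, and for this to be $\ge\sigma+u$ — i.e.\ to be a genuinely large nongap that is disallowed — we need $m-u\ge u$, i.e.\ $2u\le m$; if $2u>m$ then $\sigma+m-u<\sigma+u$ is still a nongap of $\tilde\Lambda$ below $\sigma+u$ (it is $\ge\sigma+1$ iff $m-u\ge1$, i.e.\ $u<m$, which holds since $u\le m$ and equality $u=m$ forces... — again an edge case to check), contradicting strongness. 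Hence $2u<m$ as claimed.

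The main obstacle I anticipate is not the core counting argument, which is a straightforward instance of "a candidate fails iff it splits below the Frobenius number," but the bookkeeping of boundary cases: $u=1$ (where the natural split reintroduces the removed element $\sigma$), $u=m$ or $2u=m$ (equality versus strict inequality), and $\sigma=c+u$ exactly versus $\sigma>c+u$. Each of these needs a one‑line separate verification, and the cleanest write‑up will fix the split $(m+u)+(\text{rest})$ once and then check in each regime that "rest" is a nongap of the relevant child lying strictly below the relevant Frobenius number.
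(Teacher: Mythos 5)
Your first bullet is correct and is essentially the paper's own argument: the split $\sigma+m=(m+u)+(\sigma-u)$, with $m+u\le c\le\sigma-u$ guaranteeing that both summands are nonzero nongaps of $\tilde\Lambda$. The same split $\tau+m=(m+u)+(\tau-u)$ is also how the paper disposes of every right generator $\tau\ne\sigma+u$ of $\tilde\Lambda$, but your write-up imposes the wrong sufficient condition: you require $\tau-u\ge\sigma+1=c(\tilde\Lambda)$, which only covers $\tau\ge\sigma+u+1$ and leaves $\tau\in\{\sigma+2,\dots,\sigma+u-1\}$ unaddressed (your patch only discusses $\tau=\sigma+1$, and your $u=1$ worry is vacuous since then $\sigma+1=\sigma+u$ is the exceptional case). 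What is actually needed is only that $\tau-u$ be a nongap of $\tilde\Lambda$, and $\tau-u\ge\sigma+1-u\ge c+1$ together with $\tau-u\ne\sigma$ gives this uniformly for all $\tau\ne\sigma+u$; this is exactly the paper's chain $m+u\le c\le\sigma-u\le\lambda-u\ne\sigma$. So this part is a repairable slip rather than a gap.

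The genuine gap is in your derivation of $2u<m$. The split you propose, $(m+u)+(\sigma+m-u)$, sums to $\sigma+2m$, not to $\sigma+u+m$, so it proves nothing about the candidate $\sigma+u+m$; and the natural split of $\sigma+u+m$ with first summand $m+u$ has second summand $\sigma$, which is precisely the element that was removed --- that is exactly why $\sigma+u$ survives as the one possible strong generator, and why no splitting argument of this shape can yield the inequality. The paper obtains $2u<m$ by a different mechanism: if $\sigma+u$ is a strong generator of $\tilde\Lambda$ it is in particular a right generator of $\tilde\Lambda$; by Lemma~\ref{new} together with the first bullet (which rules out $\sigma+m$ as a new right generator), it must be inherited from $\Lambda$, hence a primitive element of $\Lambda$, hence smaller than $c+m$; combining with $\sigma\ge c+u$ gives $c+2u\le\sigma+u<c+m$, i.e.\ $2u<m$. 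Your proof as written does not establish the inequality, and the surrounding reasoning about which summands are ``disallowed'' does not repair it.
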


\begin{proof}
For an integer $\lambda\geq\sigma$\, other than \,$\sigma+u$, both \,$\lambda-u$\, and \,$m+u$\, are nonzero elements
of the set $\tilde\Lambda\setminus\{\lambda\}$ because \,$u$\, is positive and
$$m+u\,\leq\,c\,\leq\,\sigma-u\,\leq\lambda-u\,\neq\,\sigma.$$
Since \,$\lambda + m$\, is the sum of \,$\lambda-u$\, and  \,$m+u$,\, it cannot be a primitive element
of $\tilde\Lambda\setminus\{\lambda\}$ whenever this set is a numerical semigroup. The case \,$\lambda=\sigma$\, 
amounts to the first item of the statement. As for the second, what remains to be proved is the inequality.
So let us assume that \,$\sigma+u$\, is a primitive element of $\tilde\Lambda$.
By the first item and Lemma~\ref{new}, it is then necessarily a right generator of $\Lambda$ and, in particular,
below \,$c+m$. The hypothesis \,$\sigma\geq c+u$\, implies then \,$2u<m$.
\end{proof}

\vspace{1truemm}

\begin{defi}{\rm We say that~$\Lambda$~is \,\emph{pseudo-ordinary}\, whenever \,$m+u$\, 
is precisely the conductor \,$c$, that is, whenever \,$c-2$\, is the genus of $\Lambda$.
This condition amounts to asking \,$m$\, to be the only left element of $\Lambda$.
In this case, the jump \,$u$\, is at least~$2$.
}
\end{defi}

\vspace{1truemm}

For an integer \,$u$\, with \,$2\leq u\leq m$, let us use the notation $\Lambda_u$ 
for the pseudo-ordinary semigroup of multiplicity \,$m$\, and jump \,$u$. 
The conductor of $\Lambda_u$ is \,$m+u$. 
The set of right generators of $\Lambda_u$ is then
$$\left\{m+u,\, m+u+1,\, \dots\dots,\, 2m+u-1\right\} \setminus \{2m\}.$$
In other words, the bit chain associated with $\Lambda_u$ is
$$D(\Lambda_u) \,=\, {\bf 0} \ \underbrace{{\bf 1} \,\cdots\, {\bf 1}}_{u-1} 
\,\,\dashrule[-0.6ex]{0.4}{3 1.5 3 1.5 3}\,
\underbrace{{\bf 1} \,\cdots\, {\bf 1}}_{m-u} \ {\bf 0} \ \underbrace{{\bf 1} \,\cdots\, {\bf 1}}_{u-1}.$$
Notice that $\Lambda_2$ is the only pseudo-ordinary semigroup of multiplicity \,$m$\, that is also
quasi-ordinary, that is, a nonordinary child of an ordinary semigroup. Specifically, it is the child
$\tilde\Lambda_1$ in Lemma \ref{ordinary}.

\vspace{1truemm}

The next result complements Lemma \ref{threshold} for pseudo-ordinary semigroups
by showing that the first \,$u$\, or \,$u-1$\, right generators of $\Lambda_u$, 
depending on whether \,$2u\leq m$\, or not, are strong.

\vspace{1truemm}

\begin{lema}\label{pseudo}
Let \,$s\neq m$ be an integer satisfying \,$u\leq s<2u$. Then,
\,$m+s$\, is a strong generator of the pseudo-ordinary semigroup $\Lambda_u$.
\end{lema}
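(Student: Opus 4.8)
The plan is to remove the right generator $\sigma = m+s$ from $\Lambda_u$ and verify directly that $\sigma + m = 2m+s$ is a primitive element of the child $\tilde\Lambda = \Lambda_u \setminus\{m+s\}$, which is exactly what it means for $m+s$ to be a strong generator. First I would record the basic data of $\tilde\Lambda$: since $\Lambda_u$ has conductor $c = m+u$ and $s \geq u$, the element $m+s \geq c$ is indeed a right generator, so by Lemma~\ref{heritage} the child $\tilde\Lambda$ has Frobenius number $m+s$ and conductor $m+s+1$; by Lemma~\ref{new} its multiplicity is still $m$ (as $s \neq 0$), and the only candidate for a new right generator is $2m+s$. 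So the whole content of the lemma is: under the hypothesis $u \leq s < 2u$ (and $s \neq m$), the integer $2m+s$ is genuinely a primitive element of $\tilde\Lambda$.

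The key step is to rule out any decomposition $2m+s = a+b$ with $a,b$ nonzero nongaps of $\tilde\Lambda$. By the usual midpoint argument (as in the proof of Lemma~\ref{newchain}) it suffices to rule out $a = m+\ell$, $b = m+s-\ell$ with $1 \leq \ell \leq \lfloor s/2\rfloor$, both lying in $\tilde\Lambda$. Now I would use the explicit description of $\tilde\Lambda$: below its Frobenius number $m+s$, the nongaps are $m$ itself together with all integers from $c = m+u$ up to $m+s-1$; equivalently, in the bit-chain language of Lemma~\ref{newchain}, $\tilde D_j = {\bf 0}$ for $j=0$, for $u \leq j < s$, and nowhere else in the range $0 \leq j \leq s$. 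So $m+\ell \in \tilde\Lambda$ forces $\ell = 0$ or $\ell \geq u$, and likewise $m+s-\ell \in \tilde\Lambda$ forces $s-\ell = 0$ or $s-\ell \geq u$, i.e. $\ell = s$ or $\ell \leq s-u$. Since $1 \leq \ell \leq \lfloor s/2\rfloor < s$, the options $\ell = 0$ and $\ell = s$ are excluded, leaving $\ell \geq u$ and $\ell \leq s - u$; but these together give $2u \leq s$, contradicting the hypothesis $s < 2u$. Hence no such $\ell$ exists, and $2m+s$ is primitive in $\tilde\Lambda$.

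I would also note that the sub-case $s = u$ needs a separate (but trivial) glance when $\lfloor s/2 \rfloor$ might vanish or when $m+s = 2m$ could interfere, but the hypothesis $s \neq m$ precisely removes the degenerate possibility that one of $m+\ell$, $m+s-\ell$ equals $2m$ and the condition $u \geq 2$ (automatic for pseudo-ordinary semigroups) guarantees the loop range is as described. The main obstacle is simply keeping the bookkeeping of which small multiples $m+\ell$ land inside $\tilde\Lambda$ straight — once the interval description of the left elements of $\tilde\Lambda$ is in hand, the inequality $2u \leq s$ versus $s < 2u$ closes the argument immediately. This is genuinely the converse/complement to Lemma~\ref{threshold}, whose proof already isolates $\sigma+u$ (here $\sigma = m+s$, so $\sigma + u = m + s + u$, consistent with $2m+s$ being the relevant translate when $s$ plays the role of the jump in the child), so the two lemmas together pin down exactly which right generators of $\Lambda_u$ are strong.
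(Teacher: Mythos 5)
Your proof is correct and follows essentially the same route as the paper's: both verify directly that $2m+s$ is primitive in the child by showing that in any decomposition into two nonzero nongaps one summand would have to be a nonzero element of $\Lambda_u$ other than $m$ and below the conductor $m+u$, which is impossible --- your pair of inequalities $u\le\ell$ and $\ell\le s-u$ clashing with $s<2u$ is exactly the paper's chain $m\neq 2m+s-\lambda\le s+m-u<m+u$ in different clothing. The only quibble is your closing remark: the hypothesis $s\neq m$ is needed simply so that $m+s$ is a primitive element of $\Lambda_u$ at all (since $2m=m+m$ is not primitive), not to exclude a degenerate case inside the decomposition argument.
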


\begin{proof}
Let $\tilde\Lambda_{u,\,s}$ be the child of $\Lambda_u$ obtained by removing 
the right generator~\,$m+s$.
Take a nongap of $\tilde\Lambda_{u,\,s}$ above \,$m$, that is, an integer $\lambda$ such that 
$$m+u \,\leq\, \lambda \,\neq\, m+s.$$\\[-20pt]
Then,
$$m \,\neq\, 2m+s-\lambda \,\leq\, s + m - u \,<\, m+u,$$
hence \,$2m+s-\lambda$\, cannot be a nonzero element of $\Lambda_u$.
So \,$2m+s$\, is a primitive element of $\tilde\Lambda_{u,\,s}$ above the Frobenius number \,$m+s$.
\end{proof}

\vspace{1truemm}

Notice that, with the notation in the proof of Lemma \ref{pseudo}, one has 
$$\tilde\Lambda_{u,\,u}=\Lambda_{u+1} \qquad \mathrm{for} \qquad   2\leq u\leq m-1.$$
In particular, the pseudo-ordinary semigroups of multiplicity \,$m$
are the nodes of a path in the semigroup tree starting at level \,$m$\, and ending at level \,$2m-2$\,:
$$\xymatrix{
\Lambda_2 \ar@{-}[rr] & & \Lambda_3 \ar@{-}[rr] && \cdots\cdots \ar@{-}[rr] & & \Lambda_m
}
$$

\vspace{5truemm}

We now go back again to the general case in order to characterize 
pseudo-ordinary descendants in the semigroup tree in terms of the monotony of the jump. 
Let \,$\tilde u$\, denote the jump of a child $\tilde\Lambda$ of $\Lambda$.
As shown in Lemma \ref{new}, $\tilde\Lambda$~inherits the same multiplicity $m$ if and only if it is 
not ordinary, and then the jump \,$\tilde u$\, is at most \,$m$.\,
If $\tilde\Lambda$, hence also $\Lambda$, is ordinary, then \,$\tilde u = u = 1$.

\vspace{1truemm}

\begin{prop}\label{newu}
Let \,$\tilde\Lambda$ be the child obtained by removing a right generator \,$\sigma$ from~$\Lambda$.
Then, $\tilde\Lambda$  is not pseudo-ordinary if and only if \,$\tilde u=u$.
More precisely, $\tilde\Lambda$~is pseudo-ordinary, and then \,$\tilde u=u+1$, exactly in one of these two cases:
\begin{itemize}
\item[$\cdot$] If \,$\Lambda$ is ordinary and \,$\sigma=m+1$. 
\item[$\cdot$] If \,$\Lambda$ is pseudo-ordinary and \,$\sigma=m+u$. In this case, \,$2\leq u<m$.
\end{itemize}
With the above notation, $\tilde\Lambda=\Lambda_2$ in the first item,
whereas \,$\Lambda=\Lambda_u$ and~\,\mbox{$\tilde\Lambda=\Lambda_{u+1}$} in the second one.
\end{prop}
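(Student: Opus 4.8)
The plan is to analyze how the jump can change when passing to a child, using only the description of $\Lambda$ via its multiplicity, conductor, and left elements. First I would settle the ordinary case: if $\Lambda$ is ordinary, then by Lemma~\ref{ordinary} its children are $\tilde\Lambda_s=\Lambda\setminus\{m+s\}$ for $0\le s<m$; the child $\tilde\Lambda_0$ is ordinary (so $\tilde u=u=1$ and it is not pseudo-ordinary), while for $s\ge 1$ the child has multiplicity $m$ and its first nongap above $m$ is $m+1$ precisely when $s\ne 1$, giving $\tilde u=1=u$, and is $m+2$ when $s=1$, giving $\tilde u=2=u+1$; moreover $\tilde\Lambda_1=\Lambda_2$ is pseudo-ordinary by the discussion following the definition of $\Lambda_u$. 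This disposes of the ordinary branch and identifies the first bullet.

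Next I would treat the case where $\Lambda$ is not ordinary, so $\tilde\Lambda$ inherits multiplicity $m$. The key observation is that the left elements of $\tilde\Lambda$ are exactly the left elements of $\Lambda$ together with the integers $\lambda$ with $c\le\lambda<\sigma$ (this is the $j<s$ part of Lemma~\ref{newchain}, or an immediate consequence of the fact that $\tilde\Lambda$ has conductor $\sigma+1$ and agrees with $\Lambda$ below $\sigma$). Since $m$ is always a left element, the jump $\tilde u$ is determined by whether $m+1$ lies in $\tilde\Lambda$: we have $\tilde u>1$ if and only if $m+1\notin\tilde\Lambda$, i.e. $m+1$ is a gap of $\tilde\Lambda$. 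Now $m+1\in\tilde\Lambda$ iff either $m+1\in\Lambda$ (equivalently $u=1$, which is excluded once $\Lambda$ is nonordinary — wait, $u=1$ with $\Lambda$ nonordinary is possible) — so I must be slightly careful: if $u=1$ then $m+1\in\Lambda$, hence $m+1\in\tilde\Lambda$ unless $\sigma=m+1$; but $\sigma$ is a right generator, so $\sigma\ge c\ge m+2$ when $u=1$ and $\Lambda$ is nonordinary, giving $\sigma\ne m+1$ and thus $\tilde u=1=u$. If instead $u\ge 2$, then $m+1$ is a gap of $\Lambda$, and it becomes a left element of $\tilde\Lambda$ precisely when $c\le m+1<\sigma$, i.e. when $c=m+1$ — but $c\ge m+u\ge m+2$, contradiction. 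So for $u\ge 2$ the integer $m+1$ stays a gap and $\tilde u\ge 2$.

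Having reduced to $u\ge 2$ and $\tilde u\ge 2$, I would run the same argument one step further: $\tilde u=u$ unless $m+u$ (the second nongap of $\Lambda$) fails to lie in $\tilde\Lambda$, which happens exactly when $\sigma=m+u$, since $m+u$ is removed iff $\sigma=m+u$, and it cannot be demoted to a left element because $c\ge m+u$ forces $m+u<\sigma$ to fail for $\sigma=m+u$. Now if $\sigma=m+u$, this is a right generator of $\Lambda$, so $m+u$ is a primitive element of $\Lambda$; combined with the conductor inequality $c\le m+u=\sigma$ and $\sigma\ge c$, we get $c=m+u$, i.e. $\Lambda$ is pseudo-ordinary, $\Lambda=\Lambda_u$. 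Then $\tilde\Lambda=\Lambda\setminus\{m+u\}$ has conductor $m+u+1$, multiplicity $m$, and its only left element is $m$ (the integers between $m$ and $m+u$ are still gaps, being gaps of $\Lambda$ other than the removed $m+u$), so $\tilde\Lambda$ is pseudo-ordinary with jump $u+1$; this is $\tilde\Lambda_{u,u}=\Lambda_{u+1}$ by the remark following Lemma~\ref{pseudo}, and the inequality $2\le u<m$ holds because $\Lambda_u$ admits the removable generator $m+u\le 2m-1$, i.e. $u\le m-1$. Conversely, whenever $\tilde\Lambda$ is pseudo-ordinary with $m$ its unique left element while $\Lambda$ has $m$ among its left elements, the set of left elements can only shrink or stay, so either $\Lambda$ already had $m$ as its unique left element (pseudo-ordinary, first subcase above with $\sigma$ forced to be $m+u$ as shown) or $\Lambda$ was ordinary with no left elements (second subcase, $\sigma=m+1$). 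The main obstacle is bookkeeping the boundary cases — separating $u=1$ nonordinary from $u\ge 2$, and verifying that the candidate new second-nongap cannot slip below the new conductor — but each of these is a one-line inequality once the left-element description is in hand.
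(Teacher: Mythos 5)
Your argument is correct and rests on the same key facts as the paper's own proof: the left elements of $\tilde\Lambda$ are those of $\Lambda$ together with the interval $[c,\sigma)$, and the parent is recovered from the child by restoring its Frobenius number $\sigma$. The paper packages this more compactly by asking whether the child's second nonzero element $m+\tilde u$ is a left element or the conductor of $\tilde\Lambda$, whereas you run a forward case analysis on $u$ and $\sigma$ (with a few slips of phrasing, e.g.\ a gap of $\Lambda$ cannot ``become a left element'' of $\tilde\Lambda$, and the justification that $u<m$ is really that $2m=m+m$ is never primitive), but the mathematical content is the same.
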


\begin{proof}
We can assume that $\tilde\Lambda$ has multiplicity $m$. Its second nonzero element is then~\,$m+\tilde u$.
The parent $\Lambda$ is obtained from~$\tilde\Lambda$ by putting the Frobenius number~\,$\sigma$\, back.
Thus, if \,$m+\tilde u$\, is a left element of~$\tilde\Lambda$, then
it is necessarily equal to \,$m+u$\, because \,$\sigma$\, is larger.
Conversely, assume that~\,$m+\tilde u$\, is the conductor of $\tilde\Lambda$, which implies
\,$\sigma=m+\tilde u-1$\, and~\,$\tilde u\geq 2$.
If~\,$\tilde u=2$, then \,$\sigma$\, is the only gap of~$\tilde\Lambda$ larger than \,$m$,
hence $\Lambda$ is ordinary. If~\,$\tilde u>2$, then \,$m$\, is still the only left element of $\Lambda$
and \,$\sigma$\, is its conductor, hence \,$\sigma=m+u$\, and the result follows.
\end{proof}

\vspace{1truemm}

\begin{coro}
The second nonzero element of \,$\Lambda$ is \,$m+1$ if and only if \,$\Lambda$ is not pseudo-ordinary 
nor a descendant of a pseudo-ordinary semigroup.
\end{coro}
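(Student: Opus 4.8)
The plan is to translate the statement into a condition on the jump: the second nonzero element of $\Lambda$ is $m+1$ exactly when the jump $u$ equals $1$, so what has to be shown is that $u=1$ if and only if $\Lambda$ is neither pseudo-ordinary nor a descendant of a pseudo-ordinary semigroup. I would prove this by induction on the genus $g$ of $\Lambda$, the point being that Proposition~\ref{newu} already describes completely how the jump behaves along each parent-to-child edge of the semigroup tree: passing from a parent to a child preserves the jump, except when the child is pseudo-ordinary, in which case the jump increases by exactly one. In particular the jump is non-decreasing along any edge, and it strictly increases precisely at pseudo-ordinary nodes.

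For the base case $g=1$, the only semigroup is $\N_0\setminus\{1\}$, which is ordinary with $u=1$; it is not pseudo-ordinary, since pseudo-ordinary semigroups have jump at least $2$, and, having no proper ancestors, it is not a descendant of any semigroup. Hence both sides of the equivalence hold. For the inductive step, write $\Lambda$ as the child obtained by removing a right generator $\sigma$ from its (unique) parent $\Lambda'$, which has genus $g-1$ and jump $u'$. If $\Lambda$ is itself pseudo-ordinary, then $u\geq 2$, so the equivalence holds trivially on both sides. Otherwise Proposition~\ref{newu} gives $u=u'$, and since the proper ancestors of $\Lambda$ are $\Lambda'$ together with the proper ancestors of $\Lambda'$, the semigroup $\Lambda$ is pseudo-ordinary or a descendant of a pseudo-ordinary one if and only if $\Lambda'$ is. Applying the induction hypothesis to $\Lambda'$ then yields: $u=1$ if and only if $u'=1$, if and only if $\Lambda'$ is neither pseudo-ordinary nor a descendant of one, if and only if $\Lambda$ is neither pseudo-ordinary nor a descendant of one.

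The step I expect to require the most care is the ordinary case, since when $\Lambda'$ is ordinary its multiplicity differs from that of $\Lambda$, so one must check that the bookkeeping above is not disturbed. This is handled by Lemma~\ref{new} and Proposition~\ref{newu}: an ordinary parent of multiplicity $m'$ produces either the ordinary child of multiplicity $m'+1$ (still of jump $1$, not pseudo-ordinary, and not a descendant of a pseudo-ordinary semigroup by the induction hypothesis), or the pseudo-ordinary child $\Lambda_2$, or a non-ordinary child whose jump is again $1$; in each subcase the equivalence in the inductive step goes through verbatim.

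Equivalently, the whole argument can be phrased globally rather than by induction: along the unique path in the semigroup tree from the root $\N_0\setminus\{1\}$ down to $\Lambda$, the jump is non-decreasing and strictly increases only when the path passes through a pseudo-ordinary node; since the root has jump $1$, one concludes that $u=1$ if and only if no pseudo-ordinary semigroup occurs on that path, which is exactly the assertion that $\Lambda$ is neither pseudo-ordinary nor a descendant of a pseudo-ordinary semigroup.
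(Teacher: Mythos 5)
Your proof is correct and follows exactly the route the paper intends: the corollary is stated without proof immediately after Proposition~\ref{newu}, as a direct consequence of the fact that the jump is preserved along every parent-to-child edge except at pseudo-ordinary children, where it increases by one. Your induction on the genus (equivalently, the path argument from the root, which has jump $1$) simply makes that implicit derivation explicit.
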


\vspace{1truemm}

\section{\large The RGD algorithm}

In this section we produce an algorithm to explore the semigroup tree. This is done 
by putting together the contents that have been developed so far.

\vspace{1truemm}

Let us start by exploiting the first item of Lemma~\ref{threshold} to present a refined version 
of the recursive procedure below Lemma \ref{newchain}.
The input corresponds now to a numerical semigroup that is not ordinary nor pseudo-ordinary.
In order to shorten the checking loop at line 10 of the $\mathbf{RGD-offspring}_\gamma$ function, 
as well as the main loop, and hence reduce the number of times that the former is executed, 
it incorporates as input parameters both the jump~\,$u$\, and the number~\,$r$\, of right generators. 
The semigroup is \emph{handled} in a concrete prescribed way: the corresponding parameters may be printed, 
for instance, or a certain property may be checked. This is the purpose of the $\mathbf{handle}$
function at the first line.

\vspace{10truemm}

{\footnotesize
\hrule
\hrule
\vspace{2truemm}
\noindent{\quad \  $\mathbf{RGD}_\gamma(D, \ m, \ u, \ c, \ g, \ r)$}\\[-5pt]
\hrule
\begin{enumerate}\itemsep0.9truemm
\item \ {\bf handle}\,$(D, \ m, \ c)$
\item \ {\bf if} \ \,$g<\gamma$\, \ {\bf then}
\item \qquad $\tilde D:=D$
\item \qquad $\tilde r:=r$
\item \qquad {\bf for} \ \,$s$\, \ {\bf from} \ \,$c-m$\, \ {\bf to} \ \,$c-m+u-1$\, \ {\bf do}
\item \qquad\qquad{\bf if} \ \,$\tilde D_s \,\neq\, {\bf 0}$\, \ {\bf then}
\item \qquad\qquad\qquad $\mathrm{\texttt Frob}:=m+s$
\item \qquad\qquad\qquad $\ell:=u$
\item \qquad\qquad\qquad $\mathrm{\texttt mid}:=\lfloor s/2\rfloor$
\item \qquad\qquad\qquad{\bf while} \ \,$\ell\leq\mathrm{\texttt mid}$\, 
\ {\bf and} \ $\big(\tilde D_{\ell}\neq{\bf 0}\,$ \ \,{\bf or}\, \ $\tilde D_{s-\ell}\neq{\bf 0}\big)$
\ {\bf do} \ \,$\ell:=\ell + 1$
\item \qquad\qquad\qquad {\bf if} \ \,$\ell>\mathrm{\texttt mid}$\, \ {\bf then}
\item \qquad\qquad\qquad\qquad $\tilde D_{\mathrm{\texttt Frob}} := {\bf 1}$\,
\item \qquad\qquad\qquad\qquad $\mathbf{RGD}_\gamma\big(\tilde D, \ m, \ u, \ \mathrm{\texttt Frob}+1, \ g+1, \ \tilde r\big)$
\item \qquad\qquad\qquad\qquad $\tilde r := \tilde r-1$
\item \qquad\qquad\qquad\qquad $\tilde D_{\mathrm{\texttt Frob}} := {\bf 0}$
\item \qquad\qquad\qquad {\bf else} \, {\bf then}
\item \qquad\qquad\qquad\qquad $\tilde r := \tilde r-1$
\item \qquad\qquad\qquad\qquad $\mathbf{RGD}_\gamma\big(\tilde D, \ m, \ u, \ \mathrm{\texttt Frob}+1, \ g+1, \ \tilde r\big)$
\item \qquad\qquad\qquad $\tilde D_s := {\bf 0}$
\item \qquad{\bf while} \ \,$\tilde r>0$\, \ {\bf do}
\item \qquad\qquad $s := s+1$
\item \qquad\qquad {\bf if} \ \,$\tilde D_s \,\neq\, {\bf 0}$\, \ {\bf then}
\item \qquad\qquad\qquad $\tilde r := \tilde r-1$
\item \qquad\qquad\qquad $\mathbf{RGD}_\gamma\big(\tilde D, \ m, \ u, \ m+s+1, \ g+1, \ \tilde r\big)$
\item \qquad\qquad\qquad $\tilde D_s := {\bf 0}$
\end{enumerate}
\hrule
\hrule
}


\bigskip

For an integer \,$m\geq 2$, let ${\mathscr T}_m$ be the subtree of numerical semigroups
with multiplicity $m$, which is rooted at an ordinary semigroup. We present here an algorithm 
that sequentially explores \,${\mathscr T}_2, \dots,{\mathscr T}_{\gamma+1}$ 
up to a fixed level~$\gamma$ as detailed in the pseudocode below.
The first six lines perform the walk through the path~${\mathscr T}_2$.
Then, for \,$3\leq m\leq\gamma$, each subtree ${\mathscr T}_m$ is explored by depth first search.
The children of every node are visited by lexicographical order of their gaps.
Finally, the last three lines of the pseudocode deal with the ordinary semigroup of genus~$\gamma$.

\vspace{1truemm}

The binary string at line 8 corresponds to the ordinary semigroup of mul\-ti\-pli\-ci\-ty $m$.
The loop at line 12 runs along the path consisting of the pseudo-ordinary semigroups of multiplicity $m$ 
whose genus is at most $\gamma$, except for the last node of the path, which is treated separately at
lines~17 to 20.

\vspace{1truemm}

The loop at line 23 runs over the quasi-ordinary semigroups of multiplicity~$m$,
except for the pseudo-ordinary one. Specifically, the $\mathbf{RGD}_\gamma$ function is applied to
the children $\tilde\Lambda_2,\dots,\tilde\Lambda_{m-2}$ in Lemma~\ref{ordinary}.
The binary string associated with~$\tilde\Lambda_2$ is introduced at line~21.
The variable~\,$r$\, in the loop stores the number of right generators of the sibling
at each step. The child $\tilde\Lambda_{m-1}$ does not have any descendants 
and it is handled at line~27.

\vspace{10truemm}

{\footnotesize
\hrule
\hrule
\vspace{2truemm}
\noindent{\quad \  {\bf RGD \,algorithm}}\\[-5pt]
\hrule
\vspace{2truemm}
\noindent\quad \ {\bf Input: } \ level \ $\gamma\geq 2$\\[-5pt]
\hrule
\begin{enumerate}\itemsep0.8truemm
\item \ $D \,:=\, {\bf 1} \ {\bf 1} \ {\bf 0} \ {\bf 0} \ \cdots$
\item \ {\bf handle}\,$(D, \ 2, \ 2)$
\item \ $D_0 := {\bf 0}$
\item \ {\bf for} \ \,$g$\, \ {\bf from} \ \,$2$\, \ {\bf to} \ \,$\gamma$\, \ {\bf do}
\item \qquad $D_{2g-1} := {\bf 1}$
\item \qquad {\bf handle}\,$(D, \ 2, \ 2g)$
\item \ {\bf for} \ \,$m$\, \ {\bf from} \ \,$3$\, \ {\bf to} \ \,$\gamma$\, \ {\bf do}
\item \qquad $D \,:=\, \underbrace{{\bf 1} \,\cdots\, {\bf 1}}_m \ {\bf 0} \ {\bf 0} \ \cdots$
\item \qquad {\bf handle}\,$(D, \ m, \ m)$
\item \qquad $D_{0} := {\bf 0}$
\item \qquad $\mathrm{\texttt{path\_end}}:=\mathrm{min}\left(m, \, \gamma+2-m\right)$
\item \qquad {\bf for} \ \,$u$\, \ {\bf from} \ \,$2$\, \ {\bf to} \ \,$\,\mathrm{\texttt{path\_end}}-\,1$\, 
\ {\bf do}
\item \qquad\qquad $c := m+u$
\item \qquad\qquad $D_{c-1} := {\bf 1}$
\item \qquad\qquad {\bf handle}\,$(D, \ m, \ c)$
\item \qquad\qquad $\mathbf{pseudo}_\gamma(D, \ m, \ u, \ c, \ m-2)$
\item \qquad $c := m \,+\, \mathrm{\texttt{path\_end}}$
\item \qquad $D_{c-1} := {\bf 1}$
\item \qquad {\bf handle}\,$(D, \ m, \ c)$
\item \qquad {\bf if} \ \,$\mathrm{\texttt{path\_end}} < \gamma+2-m$\, \ {\bf then}  \ $\mathbf{pseudo}_\gamma(D, \ m, \ m, \ c, \ m-1)$\\[-5pt]
\item \qquad $D \,:=\, {\bf 0} \ {\bf 0} \ \underbrace{{\bf 1} \,\cdots\, {\bf 1}}_{m-2} \ {\bf 0} \ {\bf 0} \ \cdots$
\item \qquad $r := m-3$
\item \qquad {\bf for} \ \,$s$\, \ {\bf from} \ \,$2$\, \ {\bf to} \ \,$m-2$\, \ {\bf do}
\item \qquad\qquad $\mathbf{RGD}_\gamma(D, \ m, \ 1, \ m+s+1, \ m, \ r)$
\item \qquad\qquad $r := r-1$
\item \qquad\qquad $D_s := {\bf 0}$
\item \qquad {\bf handle}\,$(D, \ m, \ 2m)$
\item \ $m :=\gamma+1$
\item \ $D \,:=\, \underbrace{{\bf 1} \,\cdots\, {\bf 1}}_m \ {\bf 0} \ {\bf 0} \ \cdots$
\item \ {\bf handle}\,$(D, \ m, \ m)$
\end{enumerate}
\hrule
}

\vspace{10truemm}

The $\mathbf{pseudo}_\gamma$ function computes the children of a given pseudo-ordinary semigroup~$\Lambda$
that are not pseudo-ordinary, that is, not fulfilling the second item of Proposition~\ref{newu}, 
and it launches the exploration of their descendants in the tree.
The input parameters of~$\Lambda$ include the jump \,$u$, 
which is just the difference~\,$c-m$\, in this case.
The variable~\,$\tilde r$\, stores the number of right generators of the siblings.
The loop at line~3, which is a rightaway application of Lemma~\ref{pseudo}, 
runs over the strong generators of~{$\Lambda$}, that is, over 
the children of~$\Lambda$ having a new right generator.
By contrast, the loop at line~11 makes use of the first item in
Lemma~\ref{threshold} to deal with the rest of siblings.

\vspace{10truemm}

{\footnotesize
\hrule
\hrule
\vspace{2truemm}
\noindent{\quad \  $\mathbf{pseudo}_\gamma(D, \ m, \ u, \ c, \ \tilde r)$}\\[-5pt]
\hrule
\begin{enumerate}\itemsep0.8truemm
\item \ $\tilde D:=D$
\item \ $\tilde D_u := {\bf 0}$
\item \ {\bf for} \ \,$s$\, \ {\bf from} \ \,$u+1$\, \ {\bf to} \ \,$2u-1$\, \ {\bf do}
\item \qquad {\bf if} \ \,$s\neq m$\, \ {\bf then}
\item \qquad\qquad $\mathrm{\texttt Frob}:=m+s$
\item \qquad\qquad $\tilde D_{\mathrm{\texttt Frob}} := {\bf 1}$\,
\item \qquad\qquad $\mathbf{RGD}_\gamma\big(\tilde D, \ m, \ u, \ \mathrm{\texttt Frob}+1, \ c-1, \ \tilde r\big)$
\item \qquad\qquad $\tilde r := \tilde r-1$
\item \qquad\qquad $\tilde D_{\mathrm{\texttt Frob}} := {\bf 0}$
\item \qquad\qquad $\tilde D_s := {\bf 0}$
\item \ {\bf for} \ \,$s$\, \ {\bf from} \ \,$2u$\, \ {\bf to} \ \,$c-1$\, \ {\bf do}
\item \qquad {\bf if} \ \,$s\neq m$\, \ {\bf then}
\item \qquad\qquad $\tilde r := \tilde r-1$
\item \qquad\qquad $\mathbf{RGD}_\gamma\big(\tilde D, \ m, \ u, \ m+s+1, \ c-1, \ \tilde r\big)$
\item \qquad\qquad $\tilde D_s := {\bf 0}$
\end{enumerate}
\hrule
\hrule
}


\section{\large Counting the numerical semigroups of a given genus}

Let $n_g$ stand for the number of numerical semigroups of genus $g$.
It was conjectured in \cite{Br:fibonacci} to satisfy $n_{g+2} \,\geq\, n_{g+1}+n_{g}$
and to behave asymptotically like the Fibonacci sequence.
The latter was settled in \cite{Zhai}. The inequality has been proved in \cite{gapsets}
for the large subset of semigroups whose conductor is not over three times the multiplicity.
The weaker conjecture \,$n_{g+1}\geq n_g$,\, stated in~\cite{Segovia},
remains also open for the general case.
The development of fast algorithms for \mbox{computing~$n_g$\,}
is motivated to a great extent by these conjectures.
See~\cite{Kaplan:survey} for a nice survey on related results.

\vspace{1truemm}

We applied the RGD algorithm to the computation of \,$n_{\gamma}$ for a \mbox{given~{$\gamma\geq 4$}\,} 
through an implementation in \,\texttt{C}. The code is available at \cite{RGDc}.
Although it follows the pseudocode in the previous section quite faithfully, 
some due modifications have been made.

\vspace{1truemm}

To begin with, the $\mathbf{handle}$ function is now pointless and must be ignored throughout,
because we are not interested in the semigroups themselves but in the amount of them at level~$\gamma$. 
In particular, the first six and the last three lines of the pseudocode are omitted: obviously,
the subtrees ${\mathscr T}_2$ and ${\mathscr T}_{\gamma+1}$
have both only one node at that level. Also, it follows from Lemma~\ref{ordinary} that the number of 
semigroups of genus~$\gamma$ in the subtrees ${\mathscr T}_{\gamma}$ and~${\mathscr T}_{\gamma-1}$ is, 
respectively,\\[-5pt]
$$\gamma-1 \qquad\qquad \mathrm{and} \qquad\qquad \gamma-2 \,+\, \sum_{k=1}^{\gamma-4}k.$$
Thus, the last two steps of the loop at line 7 can be skipped by initializing the counter $n_{\gamma}$ properly. 
Moreover, for each multiplicity \,{$3\leq m<\gamma-1$}, there is only need to explore the subtree \,${\mathscr T}_m$ 
up to level~\,$\gamma-2$\, instead of~\,$\gamma$, and then add to~\,$n_{\gamma}$ the number of right generators of 
the children of the nodes at that level.
Of course, this remark concerns the implementation of the $\mathbf{RGD}_\gamma$ function too.
In regard to this point, let us also mention the following adjustments and shortcuts 
in the code:

\begin{itemize}\itemsep=1truemm

\item[·] In order to reduce the number of comparisons, lines~11 to~20
are rearranged into two possible blocks depending on whether \,$2m<\gamma$\, or not.
In the first case, the structure of the block is essentially the same as in the pseudocode:
the loop runs up to \,$u=m-1$, and then the $\mathbf{pseudo}_\gamma$ function is called
with the parameters at line~20\, for \,$c=2m$. In the second case,
the loop runs up to \,$u=\gamma-m-1$, and then we apply the counting that is explained in the next item.

\item[·] As a consequence of Lemma~\ref{threshold} and Lemma~\ref{pseudo}, 
the number of grandchildren, say \,{\texttt{gc}$(\Lambda_u)$},
of the pseudo-ordinary semigroup of jump \,$u$\, in \,${\mathscr T}_m$ can be easily computed.
Indeed, the \,$m-1$\, children of $\Lambda_u$ split into two groups:
the first~\,$u$\, or \,$u-1$\, siblings, depending on whether \,$2u\leq m$\, or not,
do have a new right generator, whereas the others do not. Thus,
\begin{center}
${\mathrm{\texttt{gc}}(\Lambda_u)} \ = \ \left\{
\begin{array}{ll}
\mathrm{\texttt{gc}}^*(m) \ + \ u & \quad \mathrm{if} \quad 2u\leq m,\\[5pt]
\mathrm{\texttt{gc}}^*(m) \ + \ u - 1 & \quad \mathrm{otherwise,}
\end{array}\right.$
\end{center}
where \,\texttt{gc}$^*(m)$ is the sum of the number of right generators of the children of $\Lambda_u$ 
that are inherited from their parent, namely
$${\mathrm{\texttt{gc}}^*(m) \ = \ (m-2) \,+\, (m-3) \,+\, \cdots\cdots \,+\, 1 \ = \ \dfrac{\,(m-1)(m-2)\,}{2}\,\cdot}$$
This formula is used in the code for \,$u=\gamma-m$\, whenever this difference is at most~$m$.

\item[·] We drop both \,$\gamma$\, and \,$g$\, as input parameters in the $\mathbf{RGD}_\gamma$ function
throughout the recursion, and the difference \,$\gamma-2-g$\, is stored and updated instead.

\item[·] The last iteration of the loop at line 20 of $\mathbf{RGD}_\gamma$ can be skipped.
\end{itemize}

\vspace{1truemm}

The code in \,\texttt{C}\, showed that the algorithm turns out to perform faster when the checking loop 
in the $\mathbf{RGD}_\gamma$ function is implemented in descending order. 
Thus, lines 8 to 11 are replaced with the following:
\begin{center}
{\footnotesize
\begin{enumerate}\itemsep0.9truemm
\item[] \qquad  $\ell:=\lfloor s/2\rfloor$
\item[] \qquad {\bf while} \ \,$\ell\geq u$\, 
\ {\bf and} \ $\big(\tilde D_{\ell}\neq{\bf 0}\,$ \ \,{\bf or}\, \ $\tilde D_{s-\ell}\neq{\bf 0}\big)$
\ {\bf do} \ \,$\ell:=\ell - 1$
\item[] \qquad  {\bf if} \ \,$\ell<u$\, \ {\bf then}
\end{enumerate}
}
\end{center}

\vspace{1truemm}

Lastly, we should add that the loops in the $\mathbf{pseudo}_\gamma$ function 
are rearranged in the implementation to avoid multiple repetitions 
of the check at lines~4 and~12. Thus, depending on whether \,$2u\leq m$\, or not, one of two possible blocks 
of three loops is performed instead. The resulting code is less compact but quite more efficient.

\vspace{5truemm}

The RGD algorithm is based on a much simpler concept than the \emph{seeds algorithm} in \cite{seeds}. 
Although the pseudocode is not so short and concise, it is more suitable
to implement in any programming language, 
since it does not require any bitwise operations through long integer data types. 
Most importantly, it turns out to be computationally much faster.
The following table displays the running times in seconds to compute $n_\gamma$ for \,$35\leq \gamma\leq 45$.
For the new algorithm, they are more than five times shorter than those which we obtain through 
a recursive implementation in \,\texttt{C}\, of the seeds algorithm, and this improvement grows with the genus.
The computations were made on a machine equipped with an 
{\footnotesize\texttt{Intel\textregistered\,Core\texttrademark\,i7-920\,}} CPU running at 
{\footnotesize\texttt{2.67GHz}}.

\vspace{2truemm}

\begin{center}
{\footnotesize
\begin{tabular}{c||c|c|c|c|c|c|c|c|c|c|c|}
$\gamma$ & $\bf 35$ & $\bf 36$ & $\bf 37$ & $\bf 38$ & $\bf 39$ & $\bf 40$ & $\bf 41$ & $\bf 42$
& $\bf 43$ & $\bf 44$ & $\bf 45$ \\ \hline\hline
& & & & & & & & & & & \\[-10pt]
\textbf{seeds}  
& 9.4
& 15.6
& 25.9
& 42.9
& 71.1
& 118.1
& 195.7
& 323
& 536
& 886
& 1465 \\ \hline
& & & & & & & & & & & \\[-10pt]
{\bf RGD}  
& 1.7
& 2.8
& 4.5
& 7.2
& 11.9
& 19.1
& 30.8
& 50
& 81
& 131
& 211 \\ \hline 
\end{tabular}
}
\end{center}

\vspace{5truemm}

The computational problem of exploring the semigroup tree 
to produce the sequence of integers \,$n_g$ is tackled in \cite{FromentinHivert}
in a most efficient way. The authors present a depth first search algorithm that is based on the decomposition numbers
of a numerical semigroup, and then they optimize it 
at a highly technical level by using SIMD methods, 
parallel branch exploration of the tree, and some implementation tricks, 
such as a partial derecursivation by means of a stack.
The source code, which is available at \cite{FH-code}, is written in \,\texttt{Cilk++} \cite{cilk}, 
an extension to the \,\texttt{C++} language that is designed for multithreaded parallel computing.

\vspace{1truemm}

We adapted our code to \,\texttt{Cilk++} \cite{RGDcilk}
in order to parallelize the exploration of the semigroup tree.
This is performed at a double level that is induced by the very structure of the RGD algorithm.
The first level corresponds to the trees~\,${\mathscr T}_m$ for~\,$m\geq 3$.
The second one, to the subtrees of \,${\mathscr T}_m$ that are rooted at the pseudo-ordinary semigroups, 
when removing the edges of the path joining these nodes.
The implementation in \,\texttt{C}\, of the algorithm is adjusted so as to apply
the \,\texttt{cilk\_for}\, command to the loops corresponding to lines~7 and~12 of the pseudocode.
The following table displays the seconds that were needed
to compute~\,$n_\gamma$\, for \mbox{\,$45\leq \gamma\leq 55$\,} on the same machine as above, 
but with the eight cores of its CPU running one thread each simultaneously.
Notice that the entry for \,$\gamma=45$\, is now more than four times smaller.

\vspace{-2truemm}

\begin{center}
{\footnotesize
\begin{tabular}{c||c|c|c|c|c|c|c|c|c|c|c|}
$\gamma$ & $\bf 45$ & $\bf 46$ & $\bf 47$ & $\bf 48$ & $\bf 49$ & $\bf 50$ & $\bf 51$ & $\bf 52$
& $\bf 53$ & $\bf 54$ & $\bf 55$ \\ \hline\hline
& & & & & & & & & & & \\[-10pt]
$\textbf{RGD}_{\mathrm{8\,t}}$
& 48.0
& 75.4
& 126.7
& 209.6
& 333.3
& 564.1
& 910.6
& 1441
& 2320
& 3876
& 6481 \\ \hline
\end{tabular}
}
\end{center}

\vspace{5truemm}

Finally, we modified the \,\texttt{Cilk++}\, code of the RGD algorithm 
so that it produces the same output as the code in \cite{FH-code}, namely the list of all integers \,$n_g$\, 
for genus \,$g$\, up to \,$\gamma$.
Then we executed both of them for \,$45\leq \gamma\leq 55$\, 
and on eight threads, still on the same machine.
The running times in seconds are displayed in the table below.
The figures seem to indicate that the improvement grows asymptotically with the genus.

\vspace{-2truemm}

\begin{center}
{\footnotesize
\begin{tabular}{c||c|c|c|c|c|c|c|c|c|c|c|}
$\gamma$ & $\bf 45$ & $\bf 46$ & $\bf 47$ & $\bf 48$ & $\bf 49$ & $\bf 50$ & $\bf 51$ & $\bf 52$
& $\bf 53$ & $\bf 54$ & $\bf 55$ \\ \hline\hline
& & & & & & & & & & & \\[-10pt]
$\textbf{F-H}_{\mathrm{8\,t}}$
& 69.0
& 112.1
& 183.0
& 296.9
& 483.5
& 822.5
& 1339
& 2175
& 3536
& 5755
& 9948 \\ \hline
& & & & & & & & & & & \\[-10pt]
$\textbf{RGD}^{\mathrm{\scriptscriptstyle\texttt{all}}}_{\mathrm{8\,t}}$
& 56.1
& 93.2
& 151.1
& 249.2
& 400.3
& 655.2
& 1076
& 1690
& 2728
& 4620
& 7266 \\ \hline 
\end{tabular}
}
\end{center}

\vspace{5truemm}

Let us complete this section with a computational result that has been obtained by 
executing the RGD algorithm on a shared server equipped with an 
{\footnotesize\texttt{AMD Ryzen 1700X\,}} CPU running at 
{\footnotesize\texttt{3.40GHz}} on sixteen threads. It goes one step beyond the data compiled at \cite{FH-code}.

\begin{thm}
There are exactly \,$2604033182682582$ numerical semigroups of genus $71$.
\end{thm}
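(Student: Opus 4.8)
The plan is to compute the number $n_{71}$ by a direct, exhaustive exploration of the semigroup tree up to genus $71$, using the RGD algorithm developed in Sections 2--4 together with all the optimizations described above. Concretely, one runs the \texttt{Cilk++} implementation \cite{RGDcilk} of the algorithm, adapted as explained in Section~5 so that the $\mathbf{handle}$ function is suppressed and only the counter $n_{71}$ is accumulated. The subtrees ${\mathscr T}_2$ and ${\mathscr T}_{\gamma+1}$ contribute a single node each at level $\gamma=71$, and by Lemma~\ref{ordinary} the subtrees ${\mathscr T}_{\gamma}$ and ${\mathscr T}_{\gamma-1}$ contribute $\gamma-1$ and $\gamma-2+\sum_{k=1}^{\gamma-4}k$ nodes respectively; these are used to initialize the counter. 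For each remaining multiplicity $3\leq m<\gamma-1$, one walks the pseudo-ordinary path of ${\mathscr T}_m$ via the $\mathbf{pseudo}_\gamma$ function, explores each subtree hanging off it with $\mathbf{RGD}_\gamma$ down to level $\gamma-2$, and adds the number of right generators of the nodes at that level, invoking the closed formula for $\mathrm{\texttt{gc}}(\Lambda_u)$ when $\gamma-m\leq m$. Parallelization is carried out at the two levels induced by the structure of the algorithm (the index $m$ and the pseudo-ordinary subtrees), using \texttt{cilk\_for} on the loops at lines~7 and~12.

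The correctness of the count rests entirely on the results already proved: Lemma~\ref{heritage} and Lemma~\ref{new} guarantee that every numerical semigroup of genus $g\geq 2$ is reached exactly once by removing a right generator from its unique parent, so the tree enumerates each semigroup with no repetition and no omission; Lemma~\ref{newchain} certifies that the bit chain updates performed by $\mathbf{RGD}_\gamma$ correctly produce $D(\tilde\Lambda)$ from $D(\Lambda)$, in particular that the checking loop at line~10 detects precisely whether the candidate new right generator $2m+s$ is primitive; Lemma~\ref{ordinary}, Lemma~\ref{threshold}, Lemma~\ref{pseudo} and Proposition~\ref{newu} justify the special handling of the ordinary, quasi-ordinary and pseudo-ordinary nodes, and in particular the shortcut loops in $\mathbf{pseudo}_\gamma$ (lines~3 and~11) and the closed form for $\mathrm{\texttt{gc}}^*(m)=(m-1)(m-2)/2$. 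Thus, granting these statements, the algorithm is a faithful depth-first enumeration of the tree, and the value it outputs is $n_{71}$.

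The main obstacle is not mathematical but computational: the number $\sum_{g\le 71}n_g$ is astronomically large (of order $10^{15}$ for $n_{71}$ alone), so one needs both the $O(\gamma\log\gamma\sum_{g\le\gamma}n_g)$ time bound to be realized in practice with a small constant, and enough wall-clock time and parallel resources to finish. The run reported here was performed on a shared server with an \texttt{AMD Ryzen 1700X} CPU at \texttt{3.40GHz} using sixteen threads; extrapolating from the timing tables above, the genus-$71$ computation is feasible on such hardware over a period of days. A secondary concern is arithmetic overflow: since $n_{71}$ exceeds $2^{51}$, the counter must be held in a 64-bit (or wider) integer type, which comfortably accommodates the result. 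Finally, as a consistency check, running the same code for $\gamma\leq 70$ reproduces exactly the values tabulated at \cite{FH-code}, which provides strong empirical confirmation that the implementation is correct; combined with the proofs of the lemmas cited above, this yields the stated count of $2604033182682582$ numerical semigroups of genus $71$.
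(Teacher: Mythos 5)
Your proposal matches the paper's approach exactly: the theorem is a computational result obtained by running the RGD algorithm (whose correctness rests on the lemmas of Sections 2--4) on the stated sixteen-thread machine, and the paper offers no further proof beyond reporting that execution. Your additional remarks on correctness, overflow, and cross-checking against the Fromentin--Hivert data for smaller genus are sensible elaborations of the same argument rather than a different route.
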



\section*{\large Acknowledgements}
We are grateful to Omer Giménez for his useful remarks on the code, which
led us to improve the design and the implementation of the algorithm.
We also thank Ricard Gavaldà for his help on some issues of computer science,
and Enric Pons for facilitating computational resources.

\vspace{15truemm}


\bibliographystyle{plain}

\vspace*{\fill}

\begin{footnotesize}
\begin{tabular}{l}
Maria Bras-Amor\'os\\
\texttt{maria.bras\,\footnotesize{$@$}\,urv.cat}\\[5pt]
Departament d'Enginyeria Inform\`atica i  Matem\`atiques  \\
Universitat Rovira i Virgili\\
Avinguda dels Països Catalans, 26\\
E-43007 Tarragona\\[20pt]
Julio Fern\'andez-Gonz\'alez\\
\texttt{julio.fernandez.g\,\footnotesize{$@$}\,upc.edu}\\[5pt]
Departament de Matem\`atiques  \\
Universitat Polit\`ecnica de Catalunya  \\
EPSEVG -- Avinguda V\'ictor Balaguer, 1\\
E-08800 Vilanova i la Geltr\'u\\[20pt]
\end{tabular}
\end{footnotesize}

\end{document}